\newtheorem{theorem}{Theorem}[section]
\newtheorem{lemma}[theorem]{Lemma}
\newtheorem{proposition}[theorem]{Proposition}
\newtheorem{corollary}[theorem]{Corollary}
\theoremstyle{definition}
\theoremstyle{remark}
\newtheorem{remark}[theorem]{Remark}
\numberwithin{equation}{section}
\begin{document}

\title [Some reversed and refined Callebaut inequalities ]{Some reversed and refined Callebaut inequalities via Kontorovich constant}

\author[M. Bakherad]{Mojtaba Bakherad}

\address{Department of Mathematics, Faculty of Mathematics, University of Sistan and Baluchestan, Zahedan, Iran.}
\email{mojtaba.bakherad@yahoo.com; bakherad@member.ams.org}

\subjclass[2010]{Primary 47A63, Secondary  47A60.}

\keywords{Callebaut inequality; Cauchy--Schwarz inequality; Hadamard product; Operator geometric mean; Kontorovich constant.}

\begin{abstract}
{In this paper we employ some operator techniques to establish some refinements and reverses of the Callebaut  inequality  involving the geometric mean and Hadamard product under some mild conditions. In particular, we show
 \begin{align*}
K&\left(\frac{M^{2t-1}}{m^{2t-1}},2\right)^{r'}
\sum_{j=1}^n(A_j\sharp_{s}B_j)\circ \sum_{j=1}^n(A_j\sharp_{1-s}B_j)
\nonumber\\&\,\,+\left(\frac{t-s}{t-1/2}\right)\left(\sum_{j=1}^n(A_j\sharp_{t}B_j)\circ \sum_{j=1}^n(A_j\sharp_{1-t}B_j)
-\sum_{j=1}^n(A_j\sharp B_j)\circ \sum_{j=1}^n(A_j\sharp B_j)\right)\nonumber
\\&\leq \sum_{j=1}^n(A_j\sharp_{t}B_j)\circ \sum_{j=1}^n(A_j\sharp_{1-t} B_j)\,,
\end{align*}
where $A_j, B_j\in{\mathbb B}({\mathscr H})\,\,(1\leq j\leq n)$ are positive operators such that $0<m' \leq B_j\leq m <M \leq A_j\leq M'\,\,(1\leq j\leq n)$, either $1\geq t\geq s>{\frac{1}{2}}$ or $0\leq t\leq s<\frac{1}{2}$, $r'=\min\left\{\frac{t-s}{t-1/2},\frac{s-1/2}{t-1/2}\right\}$ and $K(t,2)=\frac{(t+1)^2}{4t}\,\,(t>0)$.}
\end{abstract} \maketitle

\section{Introduction and preliminaries}
Let ${\mathbb B}({\mathscr H})$ denote the $C^*$-algebra of all
bounded linear operators on a complex Hilbert space ${\mathscr H}$ with the identity $I$.
An operator $A\in{\mathbb B}({\mathscr H})$ is called positive
if $\langle Ax,x\rangle\geq0$ for all $x\in{\mathscr H }$, and we then write $A\geq0$. We write $A>0$ if $A$ is
a positive invertible operator. The set of all positive invertible operators  is denoted by ${\mathbb B}({\mathscr H})_+$. For
self-adjoint operators $A, B\in{\mathbb B}({\mathscr H})$, we say
$B\geq A$ if $B-A\geq0$. The Gelfand map $f\mapsto f(A)$ is an
isometric $*$-isomorphism between the $C^*$-algebra
$C(\rm{sp}(A))$ of a complex-valued continuous functions on the spectrum $\rm{sp}(A)$
of a self-adjoint operator $A$ and the $C^*$-algebra generated by $I$ and $A$. If $f, g\in C({\rm sp}(A))$, then
$f(t)\geq g(t)\,\,(t\in{\rm sp}(A))$ implies that $f(A)\geq g(A)$.

It is known that the Hadamard
product can be presented by filtering the
tensor product $A \otimes B$ through a positive linear map. In fact,
 $A\circ B=U^*(A\otimes B)U$,
where $U:{\mathscr H}\to {\mathscr H}\otimes{\mathscr H}$ is the isometry
defined by $Ue_j=e_j\otimes e_j$, where $(e_j)$ is an orthonormal basis of the Hilbert space ${\mathscr H}$; see \cite{paul}.

For $A, B\in{\mathbb B}({\mathscr H})_+$, the operator geometric mean $A\sharp B$ is defined by
$A\sharp B=A^{\frac{1}{2}}\left(A^{\frac{-1}{2}}BA^{\frac{-1}{2}}\right)^{\frac{1}{2}}A^{\frac{1}{2}}.$
    For $\alpha\in(0,1)$,  the operator weighted geometric mean is defined by
\begin{align*}
A\sharp_\alpha
B=A^{\frac{1}{2}}\left(A^{\frac{-1}{2}}BA^{\frac{-1}{2}}\right)^{\alpha}A^{\frac{1}{2}}.
\end{align*}
Callebaut \cite{CAL} showed the following refinement of the Cauchy--Schwarz inequality
\begin{align}\label{choshi}
\left(\sum_{j=1}^n x_j^{\frac{1}{2}}y_j^{\frac{1}{2}}\right)^2&\leq
\sum_{j=1}^n x_j^{\frac{1+s}{2}}y_j^{\frac{1-s}{2}}\sum_{j=1}^n x_j^{\frac{1-s}{2}}y_j^{\frac{1+s}{2}}\nonumber
\\&\leq\sum_{j=1}^n x_j^{\frac{1+t}{2}}y_j^{\frac{1-t}{2}}\sum_{j=1}^n x_j^{\frac{1-t}{2}}y_j^{\frac{1+t}{2}}\nonumber
\\&\leq \left(\sum_{j=1}^n x_j\right)\left(\sum_{j=1}^ny_j\right),
 \end{align}
where $ x_j, y_j \,\,(1\leq j\leq n)$ are positive real numbers and either $1\geq t\geq s>{\frac{1}{2}}$ or $0\leq t\leq s<\frac{1}{2}$. This is indeed an extension of the Cauchy--Schwarz inequality.\\
 Wada \cite{wada} gave an
operator version of the Callebaut inequality by showing that if $A, B\in{\mathbb B}({\mathscr H})_+$, then
{\begin{align*}
(A\sharp B)\otimes(A\sharp B)&\leq\frac{1}{2}\left\{(A\sharp_\alpha B)\otimes (A\sharp_{1-\alpha} B)+(A\sharp_{1-\alpha} B)\otimes(A\sharp_{\alpha} B)\right\}\\&\leq \frac{1}{2}\left\{(A\otimes B)+(B\otimes A)\right\},
\end{align*} }
where $\alpha\in[0,1]$. In \cite{caleba} the authors showed another operator version of the Callebaut inequality
\begin{align}\label{34rf}
\sum_{j=1}^n(A_j\sharp B_j)\circ \sum_{j=1}^n(A_j\sharp B_j)\nonumber
&\leq\sum_{j=1}^n(A_j\sharp_{s}B_j)\circ \sum_{j=1}^n(A_j\sharp_{1-s}B_j)\nonumber
\\&\leq \sum_{j=1}^n(A_j\sharp_tB_j)\circ \sum_{j=1}^n(A_j\sharp_{1-t} B_j)\nonumber\\&\leq\left(\sum_{j=1}^nA_j\right)\circ \left(\sum_{j=1}^nB_j\right)\,,
\end{align}
where $A_j, B_j\in{\mathbb B}({\mathscr H})_+\,\,(1\leq j\leq n)$ and either $1\geq t\geq s>{\frac{1}{2}}$ or $0\leq t\leq s<\frac{1}{2}$.\\
In \cite{mo-moj} the authors presented the following refinement of inequality \eqref{34rf} as follows
{\footnotesize\begin{align}\label{moj-mo}
\sum_{j=1}^n(A_j\sharp_{s}B_j)&\circ \sum_{j=1}^n(A_j\sharp_{1-s}B_j)\nonumber
\\&\leq\sum_{j=1}^n(A_j\sharp_{s}B_j)\circ \sum_{j=1}^n(A_j\sharp_{1-s}B_j)\nonumber
\\&\,\,+\left(\frac{t-s}{s-1/2}\right)\left(\sum_{j=1}^n(A_j\sharp_{s}B_j)\circ \sum_{j=1}^n(A_j\sharp_{1-s}B_j)
-\sum_{j=1}^n(A_j\sharp B_j)\circ \sum_{j=1}^n(A_j\sharp B_j)\right)\nonumber
\\&\leq \sum_{j=1}^n(A_j\sharp_{t}B_j)\circ \sum_{j=1}^n(A_j\sharp_{1-t} B_j)\,,
 \end{align}}
in which $A_j, B_j\in{\mathbb B}({\mathscr H})_+\,\,(1\leq j\leq n)$ and either $1\geq t\geq s>{\frac{1}{2}}$ or $0\leq t\leq s<\frac{1}{2}$.\\
There have been obtained several Cauchy--Schwarz type inequalities for Hilbert space operators and matrices; see
 \cite{aldaz, ABM, I-V, salemi} and references therein.

In this paper, we  present some  refinements and reverses of the Callebaut inequality involving the weighted geometric mean and Hadamard product of Hilbert space operators.
\section{Further refienements of the Callebaut inequality involving Hadamard product }

The Kontorovich constant is
\begin{align*}
K(t,2)=\frac{(t+1)^2}{4t}\qquad(t>0).
\end{align*}
The classical Young inequality states that
 \begin{align*}
 a^\nu b^{1-\nu}\leq \nu a+(1-\nu)b,
\end{align*}
where $a,b\geq0$ and $\nu\in[0,1]$.
Recently, Zuo et. al. \cite{zho} showed an improvement of the Young inequality as follows:
\begin{align*}
K\left(\sqrt{\frac{a}{b}},2\right)^{r}a^\nu b^{1-\nu}\leq \nu a+(1-\nu)b,
\end{align*}
where $a,b>0$, $\nu\in[0,1]$, $r=\min\left\{\nu,1-\nu\right\}$.
Applying this inequality, J. Wu and J. Zhao \cite{W-Z} showed the following refiniment of the Young inequality
\begin{align}\label{Wu-Zhao}
K\left(\sqrt{\frac{a}{b}},2\right)^{r'}a^\nu b^{1-\nu}+r\left(
\sqrt{a}-\sqrt{b}\right)^2\leq \nu a+(1-\nu)b,
\end{align}
where $a,b>0$, $\nu\in[0,1]-\left\{\frac{1}{2}\right\}$, $r=\min\left\{\nu,1-\nu\right\}$ and $r'=\min\left\{2r,1-2r\right\}$.
Using \eqref{Wu-Zhao} we get  the following lemmas.
\begin{lemma}
Let $a,b>0$ and $\nu\in[0,1]-\left\{\frac{1}{2}\right\}$. Then
\begin{align}\label{kokol}
K\left(\sqrt{\frac{a}{b}},2\right)^{r'}\left(a^\nu b^{1-\nu}+a^{1-\nu}b^{\nu}\right)+2r\left(
\sqrt{a}-\sqrt{b}\right)^2\leq a+b,
\end{align}
where  $r=\min\left\{\nu,1-\nu\right\}$ and $r'=\min\left\{2r,1-2r\right\}.$
\end{lemma}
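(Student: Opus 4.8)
The plan is to derive \eqref{kokol} directly from the refined Young inequality \eqref{Wu-Zhao} by applying it twice, once with weight $\nu$ and once with weight $1-\nu$, and then adding the two resulting inequalities. First I would note that the quantity $r=\min\{\nu,1-\nu\}$ and the quantity $r'=\min\{2r,1-2r\}$ are symmetric under the exchange $\nu\leftrightarrow 1-\nu$, so the constants appearing in the two applications are literally the same. Likewise $K\left(\sqrt{a/b},2\right)$ is independent of $\nu$. Applying \eqref{Wu-Zhao} with exponent $\nu$ gives
\begin{align*}
K\left(\sqrt{\frac{a}{b}},2\right)^{r'}a^\nu b^{1-\nu}+r\left(\sqrt{a}-\sqrt{b}\right)^2\leq \nu a+(1-\nu)b,
\end{align*}
and applying it with exponent $1-\nu$ (still legitimate since $1-\nu\in[0,1]-\{1/2\}$ whenever $\nu$ is) gives
\begin{align*}
K\left(\sqrt{\frac{a}{b}},2\right)^{r'}a^{1-\nu} b^{\nu}+r\left(\sqrt{a}-\sqrt{b}\right)^2\leq (1-\nu) a+\nu b.
\end{align*}

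Adding these two inequalities, the right-hand sides sum to $(\nu a+(1-\nu)b)+((1-\nu)a+\nu b)=a+b$, while the left-hand side becomes $K\left(\sqrt{a/b},2\right)^{r'}\left(a^\nu b^{1-\nu}+a^{1-\nu}b^{\nu}\right)+2r\left(\sqrt{a}-\sqrt{b}\right)^2$, which is exactly the desired left-hand side of \eqref{kokol}. This yields the claim. One small point worth checking explicitly is that in the degenerate regime where $\nu=0$ or $\nu=1$ the inequality still holds: there $r=0$ and $r'=0$, so $K(\cdot,2)^{0}=1$ and \eqref{kokol} reduces to $a^0b^1+a^1b^0=a+b$, an identity, so the borderline case is harmless; but since the hypothesis already excludes $\nu=1/2$ and the statement is vacuous-friendly at the endpoints, no separate argument is needed.

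There is essentially no obstacle here: the entire content is the symmetry of $r$ and $r'$ under $\nu\mapsto 1-\nu$ together with a single addition step, so the "hard part" is merely recording that \eqref{Wu-Zhao} may be invoked with $1-\nu$ in place of $\nu$ and that the error term $r\left(\sqrt{a}-\sqrt{b}\right)^2$ and the Kontorovich factor are unchanged. I would present the two displayed inequalities and their sum, remark on the $\nu\leftrightarrow 1-\nu$ symmetry of the constants, and conclude.
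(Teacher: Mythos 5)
Your proposal is correct and is exactly the argument the paper intends: the lemma is stated as an immediate consequence of \eqref{Wu-Zhao}, obtained by applying it with exponents $\nu$ and $1-\nu$ and adding, using that $r$, $r'$, and the Kontorovich factor are invariant under $\nu\mapsto 1-\nu$. No further comment is needed.
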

\begin{lemma}\label{rar32}
Let $0<m' \leq B\leq m <M \leq A\leq M'$ and either $1\geq t\geq s>{\frac{1}{2}}$ or $0\leq t\leq s<\frac{1}{2}$.  Then
\begin{align}\label{tool}
K\left(\frac{M^{2t-1}}{m^{2t-1}},2\right)^{r'}
&(A^{s}\otimes B^{1-s}+A^{1-s}\otimes B^{s}) \nonumber\\&\,\,\,\,+\left(\frac{t-s}{t-1/2}\right)\left(A^{t}\otimes B^{1-t}+A^{1-t}\otimes B^{t}-2(A^{\frac{1}{2}}\otimes B^{\frac{1}{2}})\right) \nonumber\\&\leq A^{t}\otimes B^{1-t}+A^{1-t}\otimes B^{t}\,,
 \end{align}
where $r'=\min\left\{\frac{t-s}{t-1/2},\frac{s-1/2}{t-1/2}\right\}$.
\end{lemma}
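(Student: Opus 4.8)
The plan is to reduce \eqref{tool} to a single application of the scalar inequality \eqref{kokol}, read off along the joint spectrum of the commuting positive operators $X:=A\otimes I$ and $Y:=I\otimes B$ on ${\mathscr H}\otimes{\mathscr H}$, whose joint spectrum is contained in $[M,M']\times[m',m]$. Every operator occurring in \eqref{tool} has the form $X^pY^q=A^p\otimes B^q$, so by the continuous functional calculus for this commuting pair it is enough to prove the corresponding scalar inequality for all $a\in[M,M']$ and $b\in[m',m]$.

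Next I would fix the weight. Put $\lambda=\frac{s-1/2}{t-1/2}$, so that $s=\lambda t+(1-\lambda)\tfrac12$ with $\lambda\in(0,1]$ under either of the two hypotheses on $(s,t)$, and set $\nu=\tfrac12+\tfrac\lambda2\in(\tfrac12,1]$. Then $r:=\min\{\nu,1-\nu\}=1-\nu$, $2r=1-\lambda=\frac{t-s}{t-1/2}$, and $r':=\min\{2r,1-2r\}=\min\bigl\{\frac{t-s}{t-1/2},\frac{s-1/2}{t-1/2}\bigr\}$ is precisely the exponent appearing in \eqref{tool}. Now apply \eqref{kokol} with this $\nu$ to the pair $\bigl(a^tb^{1-t},\,a^{1-t}b^t\bigr)$ in place of $(a,b)$. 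A routine check of exponents gives $(a^tb^{1-t})^{\nu}(a^{1-t}b^t)^{1-\nu}=a^sb^{1-s}$, and symmetrically $(a^tb^{1-t})^{1-\nu}(a^{1-t}b^t)^{\nu}=a^{1-s}b^s$; moreover $\bigl(\sqrt{a^tb^{1-t}}-\sqrt{a^{1-t}b^t}\bigr)^2=a^tb^{1-t}+a^{1-t}b^t-2a^{1/2}b^{1/2}$ and the right-hand side of \eqref{kokol} is $a^tb^{1-t}+a^{1-t}b^t$. Since $2r=\frac{t-s}{t-1/2}$, the inequality \eqref{kokol} turns into \eqref{tool} term by term, except that the Kontorovich coefficient on the left is $K\bigl(\sqrt{a^tb^{1-t}/(a^{1-t}b^t)},2\bigr)^{r'}=K\bigl((a/b)^{t-1/2},2\bigr)^{r'}$ rather than a constant.

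The last step — which I expect to be the real obstacle — is to bound this variable coefficient below by the constant in \eqref{tool}. Since $a\ge M>m\ge b$, one has $a/b\ge M/m>1$; combining this with the sign of $2t-1$ and the fact that $u\mapsto K(u,2)=\tfrac{(u+1)^2}{4u}$ is decreasing on $(0,1]$ and increasing on $[1,\infty)$, one can replace $K\bigl((a/b)^{t-1/2},2\bigr)$ by its value at the extreme ratio $a/b=M/m$, uniformly in $(a,b)$; this is the step that produces the Kontorovich constant on the left-hand side of \eqref{tool}. Because $r'\ge0$ and $K(\cdot,2)\ge1$, the quantity $K\bigl((a/b)^{t-1/2},2\bigr)^{r'}$ dominates that constant, and since it multiplies the positive term $a^sb^{1-s}+a^{1-s}b^s$ (resp. the positive operator $A^s\otimes B^{1-s}+A^{1-s}\otimes B^s$) while the rest of the inequality is unchanged, shrinking it only weakens the estimate. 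Hence the uniform scalar inequality holds for all $a\in[M,M']$, $b\in[m',m]$, and feeding it back through the functional calculus for the commuting pair $(X,Y)$ gives \eqref{tool}. The delicate point is keeping track of exactly which power of $M/m$ ends up inside $K$ and of which monotonicity branch of $K$ is used in the two separate cases $t>\tfrac12$ and $t<\tfrac12$.
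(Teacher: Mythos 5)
Your reduction is essentially the paper's own argument in different clothing. The paper specializes \eqref{kokol} to $b=a^{-1}$, $\nu=\frac{1-\mu}{2}$ to get the one--variable inequality $K(a,2)^{r'}(a^{\mu}+a^{-\mu})+(1-\mu)(a+a^{-1}-2)\le a+a^{-1}$, applies the functional calculus to the single operator $A^{\alpha}\otimes B^{-\alpha}$ with $\mu=\beta/\alpha$, multiplies through by $A\otimes B$, and finally substitutes $\alpha=2t-1$, $\beta=2s-1$, $A\mapsto A^{1/2}$, $B\mapsto B^{1/2}$. Your two--variable functional calculus on the commuting pair $A\otimes I$, $I\otimes B$, with \eqref{kokol} applied to the pair $(a^{t}b^{1-t},a^{1-t}b^{t})$, is the same computation done in one step, and your bookkeeping of $\nu=\frac12+\frac{\lambda}{2}$, $r=1-\nu$, $2r=\frac{t-s}{t-1/2}$ and $r'$ is correct.

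The gap sits exactly at the point you defer, and it cannot be closed as stated. The variable coefficient your argument produces is $K\bigl((a/b)^{t-1/2},2\bigr)^{r'}$, and over the box $a\in[M,M']$, $b\in[m',m]$ its infimum is $K\bigl((M/m)^{t-1/2},2\bigr)^{r'}$ (increasing branch of $K$ for $t>\frac12$, decreasing branch for $t<\frac12$, using $K(u,2)=K(u^{-1},2)$). This is \emph{strictly smaller} than the constant $K\bigl((M/m)^{2t-1},2\bigr)^{r'}$ appearing in \eqref{tool} whenever $t\neq\frac12$ and $M>m$, so no monotonicity step can upgrade it to the printed constant; your sentence ``because $r'\ge0$ and $K(\cdot,2)\ge1$ the quantity dominates that constant'' only yields domination of $1$, not of $K((M/m)^{2t-1},2)^{r'}$. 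In fact the printed constant makes the scalar case of \eqref{tool} false: take $t=1$, $s=\frac34$, $A=M=M'=4$, $B=m=m'=1$, so $r'=\frac12$; the left side is $K(4,2)^{1/2}(4^{3/4}+4^{1/4})+\frac12(4+1-4)=\frac54\cdot 3\sqrt2+\frac12\approx 5.80>5$, whereas with $K(\sqrt4,2)^{1/2}=(9/8)^{1/2}$ one gets exactly $5$. So what your proposal proves --- and, on close reading, all the paper's own proof proves, since the final substitution $A\mapsto A^{1/2}$, $B\mapsto B^{1/2}$ replaces $M/m$ by $(M/m)^{1/2}$, a square root the paper silently drops --- is \eqref{tool} with $K\bigl(\sqrt{M^{2t-1}/m^{2t-1}},2\bigr)^{r'}$ in place of $K\bigl(M^{2t-1}/m^{2t-1},2\bigr)^{r'}$. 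Your method is sound; it delivers the corrected constant, not the stated one.
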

\begin{proof}
Let $a>0$. If we replace $b$ by $a^{-1}$ and take $\nu=\frac{1-\mu}{2}$  \eqref{kokol}, then we get
\begin{align}\label{ttt1}
K(a,2)^{r'}\left(a^\mu+a^{-\mu}\right)+(1-\mu)\left(
a+a^{-1}-2\right)\leq a+a^{-1},
\end{align}
in which  $\mu\in(0,1]$ and  $r'=\min\left\{1-\mu,\mu\right\}$. Let us fix positive real numbers $\alpha,\beta$ such that $\beta<\alpha$. It follows from $0<m' \leq B\leq m <M \leq A\leq M'$ that
$I\leq h=\left (\frac{M}{m}\right)^\alpha\leq A^\alpha\otimes B^{-\alpha}\leq h'=\left(\frac{M'}{m'}\right)^\alpha$ and $\rm{sp}(A^\alpha\otimes B^{-\alpha})\subseteq[h,h']\subseteq(1,+\infty)$.
Since the Kontorovich constant $\frac{(a+1)^2}{4a}$ is an increasing function on $(1,+\infty)$, by \eqref{ttt1} we have
\begin{align*}
K\left(\frac{M^\alpha}{m^\alpha},2\right)^{r'}\left(a^\mu+a^{-\mu}\right)+(1-\mu)\left(
a+a^{-1}-2\right)\leq a+a^{-1},
\end{align*}
 where $\mu\in(0,1]$ and  $r'=\min\left\{1-\mu,\mu\right\}$. Using the functional calculus,
if we replace $a$ by  the operator $A^\alpha\otimes B^{-\alpha}$ and $\mu$ by $\frac{\beta}{\alpha}$ we have
\begin{align}\label{theo345}
K\left(\frac{M^\alpha}{m^\alpha},2\right)^{r'}&\left(A^\beta\otimes B^{-\beta}+A^{-\beta}\otimes B^{\beta}\right)\nonumber\\&\,\,\,+\left(1-\frac{\beta}{\alpha}\right)\left(
A^\alpha\otimes B^{-\alpha}+A^{-\alpha}\otimes B^{\alpha}-2I\right)\nonumber\\&\leq A^\alpha\otimes B^{-\alpha}+A^{-\alpha}\otimes B^{\alpha},
\end{align}
where  $r'=\min\left\{1-\frac{\beta}{\alpha},\frac{\beta}{\alpha}\right\}$. Multiplying both sides of \eqref{theo345} by $A^\frac{1}{2}\otimes B^\frac{1}{2}$ we reach
\begin{align}\label{theo3456}
K\left(\frac{M^{\alpha}}{m^{\alpha}},2\right)^{r'}&\left(A^{1+\beta}\otimes B^{1-\beta}+A^{1-\beta}\otimes B^{1+\beta}\right)\nonumber\\&\,\,\,+\left(1-\frac{\beta}{\alpha}\right)\left(
A^{1+\alpha}\otimes B^{1-\alpha}+A^{1-\alpha}\otimes B^{1+\alpha}-2(A\otimes B)\right)\nonumber\\&\leq A^{1+\alpha}\otimes B^{1-\alpha}+A^{1-\alpha}\otimes B^{1+\alpha}.
\end{align}
Now, if we replace $\alpha, \beta, A, B$ by $2t-1, 2s-1, A^\frac{1}{2}, B^\frac{1}{2}$ respectively, in \eqref{theo3456}, we
obtain
\begin{align*}
K\left(\frac{M^{2t-1}}{m^{2t-1}},2\right)^{r'
}&(A^{s}\otimes B^{1-s}+A^{1-s}\otimes B^{s}) \\&\,\,\,\,+\left(\frac{t-s}{t-1/2}\right)\left(A^{t}\otimes B^{1-t}+A^{1-t}\otimes B^{t}-2(A^{\frac{1}{2}}\otimes B^{\frac{1}{2}})\right) \\&\leq A^{t}\otimes B^{1-t}+A^{1-t}\otimes B^{t}\,
\end{align*}
for  either $1\geq t\geq s>{\frac{1}{2}}$ or $0\leq t\leq s<\frac{1}{2}$ and $r'=\min\left\{\frac{t-s}{t-1/2},\frac{s-1/2}{t-1/2}\right\}$.
\end{proof}
We are ready to prove the first result of this section.
\begin{theorem}\label{vow13}
Let $0<m' \leq B_j\leq m <M \leq A_j\leq M'\,\,(1\leq j\leq n)$ and either $1\geq t\geq s>{\frac{1}{2}}$ or $0\leq t\leq s<\frac{1}{2}$.  Then
\begin{align}\label{maman}
K&\left(\frac{M^{2t-1}}{m^{2t-1}},2\right)^{r'}
\sum_{j=1}^n(A_j\sharp_{s}B_j)\circ \sum_{j=1}^n(A_j\sharp_{1-s}B_j)
\nonumber\\&\,\,+\left(\frac{t-s}{t-1/2}\right)\left(\sum_{j=1}^n(A_j\sharp_{t}B_j)\circ \sum_{j=1}^n(A_j\sharp_{1-t}B_j)
-\sum_{j=1}^n(A_j\sharp B_j)\circ \sum_{j=1}^n(A_j\sharp B_j)\right)\nonumber
\\&\leq \sum_{j=1}^n(A_j\sharp_{t}B_j)\circ \sum_{j=1}^n(A_j\sharp_{1-t} B_j)\,,
\end{align}
where $r'=\min\left\{\frac{t-s}{t-1/2},\frac{s-1/2}{t-1/2}\right\}$.
\end{theorem}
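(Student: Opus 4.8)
The plan is to deduce \eqref{maman} from Lemma~\ref{rar32} by a dilation--compression argument that simultaneously produces the weighted geometric means and the \emph{full} double Hadamard sums. Put $\mathbb{A}=\bigoplus_{j=1}^{n}A_{j}$ and $\mathbb{B}=\bigoplus_{j=1}^{n}B_{j}$, positive invertible operators on $\mathscr{K}=\bigoplus_{j=1}^{n}\mathscr{H}$ obeying $0<m'\le\mathbb{B}\le m<M\le\mathbb{A}\le M'$, with $\mathbb{A}\sharp_{\alpha}\mathbb{B}=\bigoplus_{j=1}^{n}(A_{j}\sharp_{\alpha}B_{j})$ for every $\alpha\in[0,1]$. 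Let $V\colon\mathscr{H}\to\mathscr{K}$ be the bounded operator $Vx=(x,\dots,x)$, so that $V^{*}\bigl(\bigoplus_{j}C_{j}\bigr)V=\sum_{j=1}^{n}C_{j}$, let $U\colon\mathscr{H}\to\mathscr{H}\otimes\mathscr{H}$ be the isometry of the introduction implementing the Hadamard product, and set $W:=(V\otimes V)U$. Then, for all $\alpha,\beta\in[0,1]$,
\begin{align*}
W^{*}\bigl[(\mathbb{A}\sharp_{\alpha}\mathbb{B})\otimes(\mathbb{A}\sharp_{\beta}\mathbb{B})\bigr]W
&=U^{*}\Bigl[\bigl(V^{*}(\mathbb{A}\sharp_{\alpha}\mathbb{B})V\bigr)\otimes\bigl(V^{*}(\mathbb{A}\sharp_{\beta}\mathbb{B})V\bigr)\Bigr]U\\
&=\Bigl(\sum_{j=1}^{n}A_{j}\sharp_{\alpha}B_{j}\Bigr)\circ\Bigl(\sum_{j=1}^{n}A_{j}\sharp_{\beta}B_{j}\Bigr),
\end{align*}
and $X\mapsto W^{*}XW$ is a positive linear map. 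Hence it suffices to establish, on $\mathscr{K}\otimes\mathscr{K}$, the symmetrised inequality
\begin{align*}
K\bigl(\tfrac{M^{2t-1}}{m^{2t-1}},2\bigr)^{r'}&\bigl[(\mathbb{A}\sharp_{s}\mathbb{B})\otimes(\mathbb{A}\sharp_{1-s}\mathbb{B})+(\mathbb{A}\sharp_{1-s}\mathbb{B})\otimes(\mathbb{A}\sharp_{s}\mathbb{B})\bigr]\\
&+\tfrac{t-s}{t-1/2}\bigl[(\mathbb{A}\sharp_{t}\mathbb{B})\otimes(\mathbb{A}\sharp_{1-t}\mathbb{B})+(\mathbb{A}\sharp_{1-t}\mathbb{B})\otimes(\mathbb{A}\sharp_{t}\mathbb{B})-2\,(\mathbb{A}\sharp\mathbb{B})\otimes(\mathbb{A}\sharp\mathbb{B})\bigr]\\
&\le(\mathbb{A}\sharp_{t}\mathbb{B})\otimes(\mathbb{A}\sharp_{1-t}\mathbb{B})+(\mathbb{A}\sharp_{1-t}\mathbb{B})\otimes(\mathbb{A}\sharp_{t}\mathbb{B}):
\end{align*}
applying $W^{*}(\cdot)W$ and using the commutativity of $\circ$ identifies the two summands inside each bracket, the resulting common factor $2$ cancels, and what is left is precisely \eqref{maman}.

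To prove this operator inequality, put $\mathbb{X}:=\mathbb{A}^{-1/2}\mathbb{B}\mathbb{A}^{-1/2}$, so that $\mathbb{A}\sharp_{\alpha}\mathbb{B}=\mathbb{A}^{1/2}\mathbb{X}^{\alpha}\mathbb{A}^{1/2}$ and hence $(\mathbb{A}\sharp_{\alpha}\mathbb{B})\otimes(\mathbb{A}\sharp_{\beta}\mathbb{B})=(\mathbb{A}^{1/2}\otimes\mathbb{A}^{1/2})(\mathbb{X}^{\alpha}\otimes\mathbb{X}^{\beta})(\mathbb{A}^{1/2}\otimes\mathbb{A}^{1/2})$. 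Since congruence by the invertible operator $\mathbb{A}^{1/2}\otimes\mathbb{A}^{1/2}$ preserves the order, the inequality above is equivalent to the one obtained by replacing each $(\mathbb{A}\sharp_{\alpha}\mathbb{B})\otimes(\mathbb{A}\sharp_{\beta}\mathbb{B})$ by $\mathbb{X}^{\alpha}\otimes\mathbb{X}^{\beta}$, where now $0<m'/M'\le\mathbb{X}\le m/M<1$. This last inequality is handled exactly as in the proof of Lemma~\ref{rar32}: since $\mathbb{X}\otimes I$ and $I\otimes\mathbb{X}$ are commuting positive operators, multiplying through by $(\mathbb{X}\otimes\mathbb{X})^{-1/2}$ and setting $a:=\mathbb{X}\otimes\mathbb{X}^{-1}$ and $\mu:=\frac{s-1/2}{t-1/2}\in(0,1]$ (for which $1-\mu=\frac{t-s}{t-1/2}$ and $\min\{\mu,1-\mu\}=r'$) reduces it, after the substitution $b=a^{t-1/2}$, to the scalar estimate \eqref{ttt1} applied through the functional calculus to the positive operator $b$; the Kontorovich factor is produced from the spectral bounds of $b$ just as in Lemma~\ref{rar32}.

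The routine ingredients are the two operator identities used above --- the block-sum formula for $\mathbb{A}\sharp_{\alpha}\mathbb{B}$ and the congruence representation of $(\mathbb{A}\sharp_{\alpha}\mathbb{B})\otimes(\mathbb{A}\sharp_{\beta}\mathbb{B})$ --- together with the final functional-calculus reduction to \eqref{ttt1}. The step I expect to be most delicate is the bookkeeping of the dilation: one must check that compression by $W$ genuinely turns each single tensor factor $(\mathbb{A}\sharp_{\alpha}\mathbb{B})\otimes(\mathbb{A}\sharp_{\beta}\mathbb{B})$ into the \emph{complete} double sum $\bigl(\sum_{j}A_{j}\sharp_{\alpha}B_{j}\bigr)\circ\bigl(\sum_{j}A_{j}\sharp_{\beta}B_{j}\bigr)$, and not merely the diagonal part $\sum_{j}(A_{j}\sharp_{\alpha}B_{j})\circ(A_{j}\sharp_{\beta}B_{j})$, and that symmetrising the intermediate inequality is precisely what makes the two brackets collapse under $W^{*}(\cdot)W$ without over- or under-counting, so that the factor $2$ drops out and the exponent $r'$ emerges as in the statement.
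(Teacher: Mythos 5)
Your dilation framework is sound and is in fact a cleaner packaging of what the paper actually does: the paper applies its Lemma \ref{rar32} to each pair $(C_j,C_i)$ with $C_j=A_j^{-1/2}B_jA_j^{-1/2}$ and then symmetrises a double sum over $i,j$, which is exactly what your compression by $W=(V\otimes V)U$ of the direct sums $\mathbb{A},\mathbb{B}$ achieves in one stroke. The identity $W^*\bigl[(\mathbb{A}\sharp_\alpha\mathbb{B})\otimes(\mathbb{A}\sharp_\beta\mathbb{B})\bigr]W=\bigl(\sum_jA_j\sharp_\alpha B_j\bigr)\circ\bigl(\sum_jA_j\sharp_\beta B_j\bigr)$, the positivity of $X\mapsto W^*XW$, and the cancellation of the factor $2$ after symmetrisation are all correct.

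The gap sits precisely in the step you defer as routine: producing the Kontorovich factor from the spectral bounds of $b=(\mathbb{X}\otimes\mathbb{X}^{-1})^{t-1/2}$. In Lemma \ref{rar32} the hypothesis $B\le m<M\le A$ forces $\mathrm{sp}\bigl(A^{2t-1}\otimes B^{-(2t-1)}\bigr)$ into an interval bounded away from $1$, and only because $K(\cdot,2)$ is monotone off $1$ can the variable factor $K(a,2)^{r'}$ of \eqref{ttt1} be replaced by the constant $K\bigl((M/m)^{2t-1},2\bigr)^{r'}$. After your congruence reduction the relevant operator is $\mathbb{X}\otimes\mathbb{X}^{-1}$ with $\mathbb{X}=\bigoplus_jC_j$, and all the $C_j$ lie in the \emph{same} interval $[m'/M',\,m/M]$; hence $\mathrm{sp}(\mathbb{X}\otimes\mathbb{X}^{-1})$ contains $1$ (take equal spectral values in the two tensor factors), the minimum of $K(\cdot,2)$ over that spectrum is $K(1,2)=1$, and no constant larger than $1$ can be extracted. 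The separation in the hypothesis is between the $A$'s and the $B$'s, but your reduction (like the paper's) only ever compares $C_j$ with $C_i$. The paper's own proof founders at the same point: inequality \eqref{evs} applies \eqref{tool} to a pair $(C_j,C_i)$ that does not satisfy the lemma's hypothesis. Indeed the statement itself fails: for $n=1$, $A_1=MI$, $B_1=mI$, $t=1$, $s=3/4$, every Hadamard product in \eqref{maman} equals $MmI$, the middle term vanishes, and the inequality collapses to $K(M/m,2)^{1/2}\le1$, which is false. So the defect is not in your bookkeeping but in the final spectral step, and it cannot be repaired without weakening the constant.
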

\begin{proof}
Put $C_j=A_j^{-{1\over2}}B_jA_j^{-{1\over2}}\,\,(1\leq j\leq n)$. By inequality \eqref{tool} we get
\begin{align}\label{evs}
K\left(\frac{M^{2t-1}}{m^{2t-1}},2\right)^{r'}&\left(C_j^{s}\otimes C_i^{1-s}+C_j^{1-s}\otimes C_i^{s}\right)\nonumber\\&\,\,\,\,+\left(\frac{t-s}{t-1/2}\right)
\left(C_j^{t}\otimes C_i^{1-t}+C_j^{1-t}\otimes C_i^{t}-2\big(C_j^{\frac{1}{2}}\otimes C_i^{\frac{1}{2}}\big)\right)\nonumber\\&\leq C_j^{t}\otimes C_i^{1-t}+C_j^{1-t}\otimes C_i^{t}\qquad(1\leq i,j\leq n).
 \end{align}
Multiplying both sides of \eqref{evs} by $A_j^{\frac{1}{2}}\otimes A_i^{\frac{1}{2}}$ we get

 \begin{align}\label{9090}
K\left(\frac{M^{2t-1}}{m^{2t-1}},2\right)^{r'}&\left((A_j\sharp_{s} B_j)\otimes (A_i\sharp_{1-s} B_i)+(A_j\sharp_{1-s} B_j)\otimes (A_i\sharp_{s}B_i)\right)\nonumber\\&
\,\,\,\,+\left(\frac{t-s}{t-1/2}\right)
\Big((A_j\sharp_{t} B_j)\otimes (A_i\sharp_{1-t} B_i)+(A_j\sharp_{1-t} B_j)\nonumber\\&\,\,\,\,\otimes (A_i\sharp_{t} B_i)-2(A_j\sharp B_j)\otimes (A_i\sharp B_i)\Big)\nonumber\\&\leq
(A_j\sharp_{t}B_j)\otimes (A_i\sharp_{1-t} B_i)+(A_j\sharp_{1-t}B_j)\otimes (A_i\sharp_{t}B_i)\,
 \end{align}
for all $1\leq i,j\leq n$. Therefore
 {\footnotesize\begin{align*}
&K\left(\frac{M^{2t-1}}{m^{2t-1}},2\right)^{r'}\left(\sum_{j=1}^n(A_j\sharp_{s}B_j)\circ \sum_{j=1}^n(A_j\sharp_{1-s}B_j)\right)
\\ & \,+\left(\frac{t-s}{t-1/2}\right)\left(\sum_{j=1}^n(A_j\sharp_{t}B_j)\circ \sum_{j=1}^n(A_j\sharp_{1-t}B_j)
-\Big(\sum_{j=1}^nA_j\sharp B_j\Big)\circ\Big(\sum_{j=1}^n A_j\sharp B_j\Big)\right)\\&=
\frac{1}{2}\sum_{i,j=1}^n\Big[K\left(\frac{M^{2t-1}}{m^{2t-1}},2\right)^{r'}\Big((A_j\sharp_{s} B_j)\circ (A_i\sharp_{1-s} B_i)+(A_j\sharp_{1-s} B_j)\circ (A_i\sharp_{s}B_i)\Big)\nonumber\\&
\,+\left(\frac{t-s}{t-1/2}\right)
\left((A_j\sharp_{t} B_j)\circ (A_i\sharp_{1-t} B_i)+(A_j\sharp_{1-t} B_j)\circ (A_i\sharp_{t} B_i)-2(A_j\sharp B_j)\circ (A_i\sharp B_i)\right)\Big]\\&\leq \frac{1}{2}\sum_{i,j=1}^n\left((A_j\sharp_{t}B_j)\circ (A_i\sharp_{1-t} B_i)+(A_j\sharp_{1-t}B_j)\circ (A_i\sharp_{t}B_i)\right)\qquad(\textrm{by inequality\,\eqref{9090}})\\&
=\sum_{j=1}^n(A_j\sharp_{t}B_j)\circ \sum_{j=1}^n(A_j\sharp_{1-t} B_j)\,.
\end{align*}}
\end{proof}
\begin{remark}
It follows from $$\left(\frac{t-s}{t-1/2}\right)\left(\sum_{j=1}^n(A_j\sharp_{t}B_j)\circ \sum_{j=1}^n(A_j\sharp_{1-t}B_j)
-\sum_{j=1}^n(A_j\sharp B_j)\circ \sum_{j=1}^n(A_j\sharp B_j)\right)\geq0\,,$$ where  $A_j, B_j\in {\mathbb B}({\mathscr H})_+\,\,(1\leq j\leq n)$, either $1\geq t\geq s>{\frac{1}{2}}$ or $0\leq t\leq s<\frac{1}{2}$ and $K(t,2)=\frac{(t+1)^2}{4t}\geq 1\,\,(t>0)$ that inequality \eqref{maman} is a refinement of the second inequality of inequalities \eqref{34rf} and \eqref{moj-mo}.
\end{remark}

{We conclude an application of Theorem \ref{vow13} for numerical cases which is a refinement of inequality \eqref{choshi}.
\begin{corollary}
Let $0<m' \leq y_j\leq m <M \leq x_j\leq M'\,\,(1\leq j\leq n)$ and either  $-1\leq t\leq s<0$ or $1\geq t\geq s>0$.  Then
\begin{align*}
&\sum_{j=1}^nx_j^{{1+s}\over2}y_j^{{1-s\over2}} \sum_{j=1}^nx_j^{{1-s}\over2}y_j^{{1+s\over2}}
\\&\leq K\left(\frac{M^{t}}{m^{t}},2\right)^{r'}
\sum_{j=1}^nx_j^{{1+s}\over2}y_j^{{1-s\over2}} \sum_{j=1}^nx_j^{{1-s}\over2}y_j^{{1+s\over2}}
\\&\,\,+\left(\frac{t-s}{t}\right)\left(\sum_{j=1}^nx_j^{{1+t}\over2}y_j^{{1-t\over2}} \sum_{j=1}^nx_j^{{1-t}\over2}y_j^{{1+t\over2}}
-\left(\sum_{j=1}^nx_j^{1\over2}y_j^{1\over2}\right)^2 \right)
\\&\leq \sum_{j=1}^nx_j^{{1+t}\over2}y_j^{{1-t\over2}} \sum_{j=1}^nx_j^{{1-t}\over2}y_j^{{1+t\over2}}\,,
\end{align*}
where $r'=\min\left\{\frac{t-s}{t},\frac{s}{t}\right\}$.
\end{corollary}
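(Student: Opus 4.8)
The plan is to obtain this corollary as the one-dimensional scalar specialization of Theorem \ref{vow13}. Taking $\mathscr{H}=\mathbb{C}$, every positive invertible operator is just a positive real number, the Hadamard product $\circ$ becomes ordinary multiplication, and the weighted geometric mean reduces to $a\sharp_\alpha b=a^{1/2}(a^{-1}b)^\alpha a^{1/2}=a^{1-\alpha}b^\alpha$. First I would put $A_j=x_j$, $B_j=y_j$ and choose the weights in the theorem to be $S=\frac{1-s}{2}$ and $T=\frac{1-t}{2}$, so that $A_j\sharp_S B_j=x_j^{(1+s)/2}y_j^{(1-s)/2}$ and $A_j\sharp_{1-S}B_j=x_j^{(1-s)/2}y_j^{(1+s)/2}$, with the analogous identities for $T$ in place of $S$, while $A_j\sharp B_j=x_j^{1/2}y_j^{1/2}$. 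Under this dictionary the three products of sums in \eqref{maman} become precisely the three products of sums appearing in the corollary.

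Next I would check that the hypotheses correspond. The size condition $0<m'\le y_j\le m<M\le x_j\le M'$ is exactly the hypothesis of Theorem \ref{vow13}. For the weights, a direct computation shows that $1\ge t\ge s>0$ is equivalent to $0\le T\le S<\frac12$ and that $-1\le t\le s<0$ is equivalent to $1\ge T\ge S>\frac12$, so each regime of the corollary falls into one of the two admissible cases of the theorem. Then I would rewrite the Kontorovich factor: since $2T-1=-t$, the theorem's constant is $K\left((M/m)^{-t},2\right)$, and since $K(x,2)=\frac{(x+1)^2}{4x}$ satisfies $K(1/x,2)=K(x,2)$, this equals $K\left(M^t/m^t,2\right)$, uniformly in the sign of $t$. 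I would also compute $\frac{T-S}{T-1/2}=\frac{t-s}{t}$ and $\frac{S-1/2}{T-1/2}=\frac{s}{t}$, so that the exponent $r'=\min\left\{\frac{T-S}{T-1/2},\frac{S-1/2}{T-1/2}\right\}$ turns into $r'=\min\left\{\frac{t-s}{t},\frac{s}{t}\right\}$ and the coefficient $\frac{t-s}{t}$ appears in the middle term. Substituting all of this into \eqref{maman} yields the last inequality of the corollary.

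For the first inequality I would argue as in the Remark following Theorem \ref{vow13}: in both regimes $r'\ge0$ and $K\left(M^t/m^t,2\right)\ge1$, hence $K\left(M^t/m^t,2\right)^{r'}\ge1$; moreover $\frac{t-s}{t}\ge0$, and by the Cauchy--Schwarz inequality $\sum_{j=1}^n x_j^{(1+t)/2}y_j^{(1-t)/2}\sum_{j=1}^n x_j^{(1-t)/2}y_j^{(1+t)/2}\ge\left(\sum_{j=1}^n x_j^{1/2}y_j^{1/2}\right)^2$, so the extra term is nonnegative; together these give that the left-hand side does not exceed the middle expression. None of the steps is deep; the only place that needs care is the sign bookkeeping in the change of variables --- in particular noticing that $2T-1$ is negative when $t>0$ and invoking the symmetry $K(x,2)=K(1/x,2)$ so that the constant comes out in the stated form --- along with correctly pairing the two hypothesis regimes of the corollary with those of the theorem.
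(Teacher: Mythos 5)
Your proof is correct and follows essentially the same route as the paper: specialize Theorem \ref{vow13} to positive scalars (where $\circ$ is multiplication and $a\sharp_\alpha b=a^{1-\alpha}b^\alpha$) and reparametrize the weight parameters. The only cosmetic difference is that you substitute $T=\frac{1-t}{2}$, $S=\frac{1-s}{2}$ where the paper uses $\frac{1+t}{2}$, $\frac{1+s}{2}$, which forces you to invoke the (correct) symmetry $K(x,2)=K(1/x,2)$ to recover the stated constant $K\left(M^{t}/m^{t},2\right)$, whereas the paper's choice gives $2T-1=t$ directly; everything else, including the verification of the first inequality via $K^{r'}\geq 1$ and Cauchy--Schwarz, matches.
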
}
\begin{proof}
{If we put $A_j=x_j, B_j=y_j\,\,(1\leq j\leq n)$ in Theorem \ref{vow13} and inequality \eqref{34rf}, then we get
\begin{align*}
&\sum_{j=1}^nx_j^{{1-s}}y_j^{{s}} \sum_{j=1}^nx_j^{{s}}y_j^{{1-s}}
\\&\leq K\left(\frac{M^{2t-1}}{m^{2t-1}},2\right)^{r'}
\sum_{j=1}^nx_j^{{1-s}}y_j^{{s}} \sum_{j=1}^nx_j^{{s}}y_j^{{1-s}}
\\&\,\,+\left(\frac{t-s}{t-1/2}\right)\left(\sum_{j=1}^nx_j^{{1-t}}y_j^{{t}} \sum_{j=1}^nx_j^{{t}}y_j^{{1-t}}
-\left(\sum_{j=1}^nx_j^{1\over2}y_j^{1\over2}\right)^2 \right)
\\&\leq \sum_{j=1}^nx_j^{{1-t}}y_j^{{t}} \sum_{j=1}^nx_j^{{t}}y_j^{{1-t}}\,,
\end{align*}
where $r'=\min\left\{\frac{t-s}{t-1/2},\frac{s-1/2}{t-1/2}\right\}$ and either  $0\leq t\leq s<\frac{1}{2}$ or $1\geq t\geq s>{\frac{1}{2}}$. Now if we replace $s$ by ${s+1\over2}$ and $t$ by ${t+1\over2}$, respectively, where either  $-1\leq t\leq s<0$ or $1\geq t\geq s>0$, then we reach the desired inequalities.}
\end{proof}

\begin{lemma}
Let $a,b>0$ and $\nu\in(0,1)$. Then
\begin{align*}
a^\nu b^{1-\nu}+a^{1-\nu} b^\nu +2r(\sqrt{a}-\sqrt{b})^2+r'\left(2\sqrt{ab}+a+b-2a^{\frac{1}{4}}b^{\frac{3}{4}}-2a^{\frac{3}{4}}b^{\frac{1}{4}}\right) \leq a+b\,,
\end{align*}
where $r=\min\{\nu,1-\nu\}$ and $r'=\min\{2r,1-2r\}$.
\end{lemma}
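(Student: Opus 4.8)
The plan is to reduce to the case $\nu\le\frac12$ by symmetry and then obtain the estimate from two well-chosen instances of the scalar Kittaneh--Manasrah refinement of Young's inequality,
\[
x^{\lambda}y^{1-\lambda}+\min\{\lambda,1-\lambda\}\,(\sqrt x-\sqrt y)^2\le \lambda x+(1-\lambda)y\qquad(x,y>0,\ \lambda\in[0,1]),
\]
which for $\lambda\neq\frac12$ is precisely \eqref{Wu-Zhao} after discarding the factor $K(\cdot,2)^{r'}\ge 1$, and for $\lambda\in\{\frac12,1\}$ is elementary. Since all four terms in the asserted inequality are invariant under the exchange $\nu\mapsto 1-\nu$ (note that $r=\min\{\nu,1-\nu\}$ and $r'=\min\{2r,1-2r\}$ are), it suffices to treat $\nu\in(0,\tfrac12]$, in which case $r=\nu$ and $r'=\min\{2\nu,1-2\nu\}$.

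The key step is to apply the displayed refinement with weight $\lambda=2\nu$ to the two pairs $(x,y)=(\sqrt{ab},b)$ and $(x,y)=(\sqrt{ab},a)$. Using the identities $(\sqrt{ab})^{2\nu}b^{1-2\nu}=a^{\nu}b^{1-\nu}$ and $(\sqrt{ab})^{2\nu}a^{1-2\nu}=a^{1-\nu}b^{\nu}$, together with $\min\{2\nu,1-2\nu\}=r'$, these two instances read
\[
a^{\nu}b^{1-\nu}+r'\bigl((ab)^{1/4}-\sqrt b\bigr)^2\le 2\nu\sqrt{ab}+(1-2\nu)b
\]
and
\[
a^{1-\nu}b^{\nu}+r'\bigl((ab)^{1/4}-\sqrt a\bigr)^2\le 2\nu\sqrt{ab}+(1-2\nu)a.
\]
Adding these and invoking the two elementary identities
\[
\bigl((ab)^{1/4}-\sqrt a\bigr)^2+\bigl((ab)^{1/4}-\sqrt b\bigr)^2=2\sqrt{ab}+a+b-2a^{1/4}b^{3/4}-2a^{3/4}b^{1/4}
\]
and $4\nu\sqrt{ab}+(1-2\nu)(a+b)=a+b-2\nu(\sqrt a-\sqrt b)^2$, and recalling $r=\nu$, one arrives at exactly the claimed inequality.

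The substance of the argument is recognizing that the bracketed expression $2\sqrt{ab}+a+b-2a^{1/4}b^{3/4}-2a^{3/4}b^{1/4}$ is precisely the sum of squares $\bigl((ab)^{1/4}-\sqrt a\bigr)^2+\bigl((ab)^{1/4}-\sqrt b\bigr)^2$, which dictates the choice of the two pairs and of the weight $\lambda=2\nu$. After that, the only point requiring care is the bookkeeping on the parameters: that $2\nu\in[0,1]$ makes the weight admissible, and that $\min\{2\nu,1-2\nu\}$ genuinely coincides with $r'$, which holds precisely because $r=\nu$ in the reduced case. Everything else is routine algebra.
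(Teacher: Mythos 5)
Your proof is correct. It rests on the same decomposition as the paper's: the bracketed quantity $2\sqrt{ab}+a+b-2a^{1/4}b^{3/4}-2a^{3/4}b^{1/4}$ is identified as $(\sqrt[4]{ab}-\sqrt{a})^2+(\sqrt[4]{ab}-\sqrt{b})^2$, and the inequality is obtained by summing two refined Young inequalities each carrying one of these squares as its correction term. The difference is in how that ingredient is produced. The paper simply cites \cite[Lemma 1]{zha}, which already contains the extra $r'(\sqrt[4]{ab}-\sqrt{a})^2$ term, and adds the two symmetric instances (its phrasing ``summing these inequalities'' really means: for a fixed $\nu$, add the applicable inequality to its $a\leftrightarrow b$ swap). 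You instead rederive that lemma on the spot from the elementary Kittaneh--Manasrah refinement
\begin{align*}
x^{\lambda}y^{1-\lambda}+\min\{\lambda,1-\lambda\}\left(\sqrt{x}-\sqrt{y}\right)^2\le \lambda x+(1-\lambda)y
\end{align*}
via the substitutions $(x,y)=(\sqrt{ab},b)$ and $(x,y)=(\sqrt{ab},a)$ with weight $\lambda=2\nu$, after reducing to $\nu\le\tfrac12$ by the symmetry $\nu\mapsto 1-\nu$. The bookkeeping checks out: $2\nu\in[0,1]$, $\min\{2\nu,1-2\nu\}=r'$, and the identity $4\nu\sqrt{ab}+(1-2\nu)(a+b)=a+b-2\nu(\sqrt a-\sqrt b)^2$ supplies the $2r(\sqrt a-\sqrt b)^2$ term. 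What your route buys is self-containedness (no external citation) and a cleaner logical structure than the paper's, whose literal statement juxtaposes two inequalities valid on disjoint ranges of $\nu$ before ``summing'' them; what it costs is only the small extra step of justifying the Kittaneh--Manasrah inequality itself, which you correctly observe follows from \eqref{Wu-Zhao} by discarding the Kantorovich factor.
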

\begin{proof}
Let $a,b>0$, $\nu\in(0,1)$, $r=\min\{\nu,1-\nu\}$ and $r'=\min\{2r,1-2r\}$. Applying  \cite[Lemma 1]{zha}, we have the inequalities
 $$a^{1-\nu} b^\nu+\nu(\sqrt{a}-\sqrt{b})^2+r'(\sqrt[4]{ab}-\sqrt{a})^2 \leq(1-\nu) a+\nu b\,,$$
where $0<\nu\leq\frac{1}{2}$ and
$$a^{1-\nu} b^\nu+(1-\nu)(\sqrt{a}-\sqrt{b})^2+r'(\sqrt[4]{ab}-\sqrt{b})^2 \leq(1-\nu) a+\nu b\,,$$
where  $\frac{1}{2}<\nu<1$. Summing these inequalities we get the desired result.
\end{proof}
\begin{lemma}\label{rar3234}
Let $A_j, B_j\in{\mathbb B}({\mathscr H})_+\,\,(1\leq j\leq n)$ and either $1\geq t\geq s>{\frac{1}{2}}$ or $0\leq t\leq s<\frac{1}{2}$.  Then
\begin{align*}
&(A^{s}\otimes B^{1-s}+A^{1-s}\otimes B^{s}) \\&\,\,\,\,+\left(\frac{t-s}{t-1/2}\right)\left(A^{s}\otimes B^{1-s}+A^{1-s}\otimes B^{s}-2(A^{\frac{1}{2}}\otimes B^{\frac{1}{2}})\right)\\&\,\,\,\,+r'\left(A^{t}\otimes B^{1-s}+A^{1-s}\otimes B^{s}+2(A^{\frac{1}{2}}\otimes B^{\frac{1}{2}})-2A^{\frac{1+2s}{4}}\otimes B^{\frac{3-2s}{4}}-2A^{\frac{3-2s}{4}}\otimes B^{\frac{1+2s}{4}}\right) \\&\leq A^{t}\otimes B^{1-t}+A^{1-t}\otimes B^{t}\,,
 \end{align*}
where $r'=\min\left\{\frac{t-s}{t-1/2},\frac{s-1/2}{t-1/2}\right\}$.
\end{lemma}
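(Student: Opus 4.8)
The plan is to obtain this inequality from the preceding (scalar) lemma by exactly the argument that produced Lemma~\ref{rar32} from \eqref{kokol}; the only simplification is that the scalar inequality in play here carries no Kontorovich constant, so no spectral bound on $A,B$ is needed, which is why the hypothesis is merely that the operators be positive and invertible. As a preliminary I would record the factorisation
\[
2\sqrt{ab}+a+b-2a^{\frac14}b^{\frac34}-2a^{\frac34}b^{\frac14}=(\sqrt a+\sqrt b)\bigl(\sqrt[4]a-\sqrt[4]b\bigr)^{2}\qquad(a,b>0),
\]
which makes the last bracket of the scalar lemma transparent and, more importantly, makes the substitution $b=a^{-1}$ painless.

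First I would specialise the scalar lemma by setting $b=a^{-1}$ and $\nu=\tfrac{1-\mu}{2}$ (as in the proof of Lemma~\ref{rar32}), so that $r=\min\{\nu,1-\nu\}=\tfrac{1-\mu}{2}$ and hence $r'=\min\{2r,1-2r\}=\min\{\mu,1-\mu\}$. Using the factorisation above, the scalar lemma collapses to the one-variable inequality
\[
a^{\mu}+a^{-\mu}+(1-\mu)\bigl(a+a^{-1}-2\bigr)+r'\bigl(a+a^{-1}+2-2a^{\frac12}-2a^{-\frac12}\bigr)\le a+a^{-1},
\]
valid for all $a>0$ and $\mu\in(0,1)$ (the degenerate case $t=s$, where everything reduces to an equality, may be set aside). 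Now fix real $\alpha,\beta$ with $\beta/\alpha\in(0,1)$. Since $A^{\alpha}\otimes B^{-\alpha}$ is a positive invertible operator on $\mathscr H\otimes\mathscr H$, applying the Gelfand map to the displayed one-variable inequality — that is, replacing $a$ by $A^{\alpha}\otimes B^{-\alpha}$ and $\mu$ by $\beta/\alpha$, so that $a^{\mu}$ becomes $A^{\beta}\otimes B^{-\beta}$, $a^{\pm1}$ become $A^{\pm\alpha}\otimes B^{\mp\alpha}$, and $a^{\pm1/2}$ become $A^{\pm\alpha/2}\otimes B^{\mp\alpha/2}$ — yields an operator inequality with $r'=\min\{\beta/\alpha,\,1-\beta/\alpha\}$.

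Next I would pass to the congruence $X\mapsto(A^{\frac12}\otimes B^{\frac12})\,X\,(A^{\frac12}\otimes B^{\frac12})$, which preserves the operator order and sends $A^{\gamma}\otimes B^{\delta}$ to $A^{1+\gamma}\otimes B^{1+\delta}$ and $I$ to $A\otimes B$. Finally, substituting $\alpha=2t-1$, $\beta=2s-1$ and then replacing $A,B$ by $A^{1/2},B^{1/2}$ produces the claim: one has $1-\beta/\alpha=\tfrac{t-s}{t-1/2}$ and $\beta/\alpha=\tfrac{s-1/2}{t-1/2}$, so $r'=\min\bigl\{\tfrac{t-s}{t-1/2},\tfrac{s-1/2}{t-1/2}\bigr\}$ as required, while under $A\mapsto A^{1/2}$ the exponents $1+\beta,\,1-\beta,\,1+\alpha,\,1-\alpha,\,1+\tfrac\alpha2,\,1-\tfrac\alpha2$ turn into $s,\,1-s,\,t,\,1-t,\,\tfrac{2t+1}{4},\,\tfrac{3-2t}{4}$ (and symmetrically for $B$). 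It then only remains to verify $\beta/\alpha\in(0,1)$ in both admissible ranges: when $1\ge t> s>\tfrac12$ one has $0<\beta=2s-1<\alpha=2t-1\le1$, and when $0\le t< s<\tfrac12$ one has $\alpha=2t-1<\beta=2s-1<0$, so $\beta/\alpha\in(0,1)$ in either case.

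The genuine content is the scalar lemma (taken from \cite{zha}), so the operator part is essentially bookkeeping; the step requiring the most care is tracking the exponents through the three successive substitutions, and in particular seeing that it is precisely the congruence by $A^{1/2}\otimes B^{1/2}$ that converts the factors $\otimes B^{-\beta},\otimes B^{-\alpha}$ coming out of the functional calculus into the factors $\otimes B^{1-\beta},\otimes B^{1-\alpha}$ that appear in the statement. I would also note that the two-parameter shape of the conclusion forces the $b=a^{-1}$ reduction: lifting the scalar lemma directly by replacing $a,b$ with the commuting operators $A\otimes I,\,I\otimes B$ would only give a one-parameter inequality. There is no obstacle of the spectral-localisation type that complicated the proof of Lemma~\ref{rar32}, precisely because no Kontorovich factor is present here.
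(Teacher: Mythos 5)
The paper gives no explicit proof of this lemma, but the surrounding text makes clear it is meant to follow from the preceding scalar lemma by exactly the machinery of Lemma~\ref{rar32} ($b=a^{-1}$, $\nu=\tfrac{1-\mu}{2}$, functional calculus on $A^{\alpha}\otimes B^{-\alpha}$, congruence by $A^{\frac12}\otimes B^{\frac12}$, then $\alpha=2t-1$, $\beta=2s-1$), so your route is the intended one, and your bookkeeping is correct: the factorisation $2\sqrt{ab}+a+b-2a^{1/4}b^{3/4}-2a^{3/4}b^{1/4}=(\sqrt a+\sqrt b)(\sqrt[4]a-\sqrt[4]b)^2$ checks out, as do $2r=1-\mu$, $r'=\min\{\mu,1-\mu\}$ and the verification that $\beta/\alpha\in(0,1)$ in both admissible ranges. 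The problem is your final claim that this ``produces the claim.'' It does not. In the reduced scalar inequality every occurrence of $a^{\pm1}$ and $a^{\pm1/2}$ originates from the right-hand side $a+a^{-1}$, so after all substitutions the second and third brackets carry the exponent $t$, not $s$: you obtain
\begin{align*}
X_s+\left(\tfrac{t-s}{t-1/2}\right)\bigl(X_t-2G\bigr)+r'\bigl(X_t+2G-2Y_t\bigr)\le X_t,
\end{align*}
where $X_u=A^{u}\otimes B^{1-u}+A^{1-u}\otimes B^{u}$, $G=A^{\frac12}\otimes B^{\frac12}$ and $Y_u=A^{\frac{1+2u}{4}}\otimes B^{\frac{3-2u}{4}}+A^{\frac{3-2u}{4}}\otimes B^{\frac{1+2u}{4}}$, whereas the lemma as printed has $X_s-2G$ and $X_s+2G-2Y_s$ in those brackets (together with an evident typo $A^{t}\otimes B^{1-s}$). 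You even compute the exponents $\tfrac{2t+1}{4},\tfrac{3-2t}{4}$ yourself and then assert agreement with a statement containing $\tfrac{1+2s}{4},\tfrac{3-2s}{4}$; the discrepancy goes unaddressed.

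The gap is fixable, but an extra step is genuinely needed. By the joint functional calculus for the commuting operators $A\otimes I$ and $I\otimes B$, it suffices to note that for scalars $p,q>0$, with $w=p/q$, $v=u-\tfrac12$ and $c=w^{v/2}+w^{-v/2}\ge2$, the two brackets equal $\sqrt{pq}\,(c^{2}-4)$ and $\sqrt{pq}\,(c^{2}-2c)$ respectively; both are nonnegative and increasing in $|u-\tfrac12|$, and $|s-\tfrac12|\le|t-\tfrac12|$ in both admissible ranges, so replacing $t$ by $s$ inside each bracket only decreases the left-hand side. With that monotonicity argument your (stronger, and more natural) $t$-version implies the printed $s$-version; without it, your proof establishes an inequality that is not the one stated. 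You should either add this step or prove and state the $t$-version and flag the printed exponents as typographical.
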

Using Lemma \ref{rar3234} and the same argument  in the proof of Lemma \ref{rar32}, we get another refinement of inequality  \eqref{34rf}.

\begin{theorem}\label{okmn345}
Let $A_j, B_j\in{\mathbb B}({\mathscr H})_+\,\,(1\leq j\leq n)$ and either $1\geq t\geq s>{\frac{1}{2}}$ or $0\leq t\leq s<\frac{1}{2}$. Then
\begin{align}\label{maman2}
&\sum_{j=1}^n(A_j\sharp_{s}B_j)\circ \sum_{j=1}^n(A_j\sharp_{1-s}B_j)\nonumber
\\&\,\,+\left(\frac{t-s}{t-1/2}\right)\left(\sum_{j=1}^n(A_j\sharp_{s}B_j)\circ \sum_{j=1}^n(A_j\sharp_{1-s}B_j)
-\sum_{j=1}^n(A_j\sharp B_j)\circ \sum_{j=1}^n(A_j\sharp B_j)\right)\nonumber
\\&\,\,+r'\Big(\sum_{j=1}^n(A_j\sharp_{s}B_j)\circ \sum_{j=1}^n(A_j\sharp_{1-s}B_j)
+\sum_{j=1}^n(A_j\sharp B_j)\circ \sum_{j=1}^n(A_j\sharp B_j)\nonumber\\&\,\,\,\,-2\sum_{j=1}^n(A_j\sharp_{\frac{3-2s}{4}}B_j)\circ \sum_{j=1}^n(A_j\sharp_{\frac{1+2s}{4}}B_j)\Big)\nonumber
\\&\leq \sum_{j=1}^n(A_j\sharp_{t}B_j)\circ \sum_{j=1}^n(A_j\sharp_{1-t} B_j)\,,
 \end{align}
where $r'=\min\left\{\frac{t-s}{t-1/2},\frac{s-1/2}{t-1/2}\right\}$.
\end{theorem}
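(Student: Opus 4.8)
The plan is to mimic the proof of Theorem~\ref{vow13} line by line, carrying along the extra $r'$-term furnished by Lemma~\ref{rar3234}. Set $C_j=A_j^{-1/2}B_jA_j^{-1/2}$ for $1\le j\le n$; each $C_j$ is positive and invertible because every $A_j,B_j$ is. Lemma~\ref{rar3234} is, at bottom, a scalar inequality $\psi(a,b)\le a^{t}b^{1-t}+a^{1-t}b^{t}$ valid for all $a,b>0$, where $\psi(a,b)$ is obtained from the left-hand side of Lemma~\ref{rar3234} by replacing every $A^{\alpha}\otimes B^{\beta}$ by $a^{\alpha}b^{\beta}$. Applying this scalar inequality to the commuting positive operators $C_j\otimes I$ and $I\otimes C_i$ through the joint functional calculus — their joint spectrum lies in $(0,\infty)^{2}$ — and using $(C_j\otimes I)^{\alpha}(I\otimes C_i)^{\beta}=C_j^{\alpha}\otimes C_i^{\beta}$, I obtain, for all $1\le i,j\le n$, the inequality
\begin{align*}
&\big(C_j^{s}\otimes C_i^{1-s}+C_j^{1-s}\otimes C_i^{s}\big)+\left(\frac{t-s}{t-1/2}\right)\big(C_j^{s}\otimes C_i^{1-s}+C_j^{1-s}\otimes C_i^{s}-2(C_j^{1/2}\otimes C_i^{1/2})\big)\\
&\quad+r'\big(C_j^{s}\otimes C_i^{1-s}+C_j^{1-s}\otimes C_i^{s}+2(C_j^{1/2}\otimes C_i^{1/2})-2\,C_j^{\frac{1+2s}{4}}\otimes C_i^{\frac{3-2s}{4}}-2\,C_j^{\frac{3-2s}{4}}\otimes C_i^{\frac{1+2s}{4}}\big)\\
&\le C_j^{t}\otimes C_i^{1-t}+C_j^{1-t}\otimes C_i^{t},
\end{align*}
which is the analogue of \eqref{evs}.

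Next I would sandwich both sides with the positive operator $A_j^{1/2}\otimes A_i^{1/2}$ — an order-preserving congruence — and use $A_j^{1/2}C_j^{\alpha}A_j^{1/2}=A_j\sharp_{\alpha}B_j$ (with $A_j\sharp_{1/2}B_j=A_j\sharp B_j$) to rewrite each summand as a tensor product of weighted geometric means of the $A_j$ and $B_j$; this is the analogue of \eqref{9090}, now carrying the extra $r'$-bracket with the terms $(A_j\sharp B_j)\otimes(A_i\sharp B_i)$, $(A_j\sharp_{\frac{1+2s}{4}}B_j)\otimes(A_i\sharp_{\frac{3-2s}{4}}B_i)$ and $(A_j\sharp_{\frac{3-2s}{4}}B_j)\otimes(A_i\sharp_{\frac{1+2s}{4}}B_i)$. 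Finally I would sum over all ordered pairs $1\le i,j\le n$ and apply the positive linear map $X\mapsto U^{*}XU$, using $U^{*}(X\otimes Y)U=X\circ Y$ and the bilinearity and commutativity of the Hadamard product, so that $\sum_{i,j}(X_j\otimes Y_i)$ is sent to $\bigl(\sum_{j}X_j\bigr)\circ\bigl(\sum_{i}Y_i\bigr)$ and each symmetric pair of cross terms collapses (after a factor $\tfrac12$) to a single Hadamard product of sums, for example
\[
\tfrac12\sum_{i,j}\big[(A_j\sharp_{\frac{1+2s}{4}}B_j)\circ(A_i\sharp_{\frac{3-2s}{4}}B_i)+(A_j\sharp_{\frac{3-2s}{4}}B_j)\circ(A_i\sharp_{\frac{1+2s}{4}}B_i)\big]=\sum_{j}(A_j\sharp_{\frac{3-2s}{4}}B_j)\circ\sum_{i}(A_i\sharp_{\frac{1+2s}{4}}B_i).
\]
Dividing the resulting inequality by $2$ gives precisely \eqref{maman2}.

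The step that demands the most care — although it is not really an obstacle — is the promotion of the scalar content of Lemma~\ref{rar3234} to the two-variable tensor inequality displayed above; this is legitimate because $C_j\otimes I$ and $I\otimes C_i$ are commuting self-adjoint operators with spectra contained in $(0,\infty)$, so continuous functional calculus on their joint spectrum applies, exactly as in the passage from \eqref{tool} to \eqref{evs} in the proof of Theorem~\ref{vow13}. Everything else is bookkeeping identical in structure to that proof; the only genuinely new point is to carry the two new weights $\tfrac{1+2s}{4}$ and $\tfrac{3-2s}{4}$ correctly, noting that under either hypothesis on $s$ they lie in $(0,1)$, so the geometric means $A_j\sharp_{\frac{1+2s}{4}}B_j$ and $A_j\sharp_{\frac{3-2s}{4}}B_j$ are well defined.
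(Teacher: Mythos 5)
Your proposal is correct and is essentially the paper's own argument: the paper proves Theorem \ref{okmn345} by feeding Lemma \ref{rar3234} into exactly the machinery of Theorem \ref{vow13} (set $C_j=A_j^{-1/2}B_jA_j^{-1/2}$, apply the tensor inequality to the pair $(C_j,C_i)$, congruence by $A_j^{1/2}\otimes A_i^{1/2}$, sum over ordered pairs with the factor $\tfrac12$, and push through $U^*(\cdot)U$), which is precisely what you do. Your only additions are to make explicit the joint functional-calculus justification for applying the scalar content of Lemma \ref{rar3234} to $(C_j,C_i)$ and to read the term $A^{t}\otimes B^{1-s}$ in that lemma as the evident typo for $A^{s}\otimes B^{1-s}$, both of which are consistent with the paper's intent.
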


\begin{remark}
If $A_j, B_j\in{\mathbb B}({\mathscr H})_+\,\,(1\leq j\leq n)$ and either $1\geq t\geq s>{\frac{1}{2}}$ or $0\leq t\leq s<\frac{1}{2}$, then
{\footnotesize\begin{align*}\sum_{j=1}^n(A_j\sharp_{s}B_j)\circ \sum_{j=1}^n(A_j\sharp_{1-s}B_j)
+\sum_{j=1}^n(A_j\sharp B_j)\circ \sum_{j=1}^n(A_j\sharp B_j)-2\sum_{j=1}^n(A_j\sharp_{\frac{3-2s}{4}}B_j)\circ \sum_{j=1}^n(A_j\sharp_{\frac{1+2s}{4}}B_j)
\end{align*}}
and
{\footnotesize\begin{align*}\left(\frac{t-s}{t-1/2}\right)\left(\sum_{j=1}^n(A_j\sharp_{s}B_j)\circ \sum_{j=1}^n(A_j\sharp_{1-s}B_j)
-\sum_{j=1}^n(A_j\sharp B_j)\circ \sum_{j=1}^n(A_j\sharp B_j)\right)
\end{align*}}
are positive operators. So inequality \eqref{maman2} is another refinement of the second inequality of inequalities
\eqref{34rf} and \eqref{moj-mo}.
\end{remark}
If we put $B_j=I\,\,(1\leq j\leq n)$ in Theorem \ref{okmn345}, then we get the next result.
\begin{corollary}
Let $A_j\in{\mathbb B}({\mathscr H})_+\,\,(1\leq j\leq n)$ and either $1\geq t\geq s>{\frac{1}{2}}$ or $0\leq t\leq s<\frac{1}{2}$. Then
\begin{align*}
&\sum_{j=1}^nA_j^{1-s}\circ \sum_{j=1}^nA_j^s
\\&\,\,+\left(\frac{t-s}{t-1/2}\right)\left(\sum_{j=1}^nA_j^{1-s}\circ \sum_{j=1}^nA_j^s
-\sum_{j=1}^nA_j^{\frac{1}{2}}\circ \sum_{j=1}^nA_j^{\frac{1}{2}}\right)\nonumber
\\&\,\,+r'\Big(\sum_{j=1}^nA_j^{1-s}\circ \sum_{j=1}^nA_j^s
+\sum_{j=1}^nA_j^{\frac{1}{2}}\circ \sum_{j=1}^nA_j^{\frac{1}{2}}\\&\,\,\,\,-2\sum_{j=1}^nA_j^{\frac{1+2s}{4}}\circ \sum_{j=1}^nA_j^{\frac{3-2s}{4}}\Big)
\\&\leq \sum_{j=1}^nA_j^{1-t}\circ \sum_{j=1}^nA_j^t\,,
 \end{align*}
where $r'=\min\left\{\frac{t-s}{t-1/2},\frac{s-1/2}{t-1/2}\right\}$.
\end{corollary}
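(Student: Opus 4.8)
The plan is to obtain this corollary as a direct specialization of Theorem \ref{okmn345}: simply set $B_j=I$ for every $j$ with $1\leq j\leq n$ and rewrite each weighted geometric mean that occurs in \eqref{maman2}. The only computational ingredient needed is the elementary identity
\begin{align*}
A\sharp_\alpha I=A^{\frac12}\left(A^{-\frac12}IA^{-\frac12}\right)^\alpha A^{\frac12}=A^{\frac12}A^{-\alpha}A^{\frac12}=A^{1-\alpha}\qquad(\alpha\in[0,1]),
\end{align*}
valid for any $A\in{\mathbb B}({\mathscr H})_+$, which follows at once from the definition of $\sharp_\alpha$ and the functional calculus.

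Applying this identity term by term, one has $A_j\sharp_{s}B_j=A_j^{1-s}$ and $A_j\sharp_{1-s}B_j=A_j^{s}$, likewise $A_j\sharp_{t}B_j=A_j^{1-t}$ and $A_j\sharp_{1-t}B_j=A_j^{t}$, and $A_j\sharp B_j=A_j\sharp_{1/2}B_j=A_j^{1/2}$. For the two extra weights appearing in the $r'$-term of \eqref{maman2} one computes $A_j\sharp_{\frac{3-2s}{4}}B_j=A_j^{1-\frac{3-2s}{4}}=A_j^{\frac{1+2s}{4}}$ and $A_j\sharp_{\frac{1+2s}{4}}B_j=A_j^{1-\frac{1+2s}{4}}=A_j^{\frac{3-2s}{4}}$, so that
\begin{align*}
\sum_{j=1}^n\big(A_j\sharp_{\frac{3-2s}{4}}B_j\big)\circ \sum_{j=1}^n\big(A_j\sharp_{\frac{1+2s}{4}}B_j\big)
=\sum_{j=1}^nA_j^{\frac{1+2s}{4}}\circ \sum_{j=1}^nA_j^{\frac{3-2s}{4}}\,.
\end{align*}
Substituting all of these into \eqref{maman2} reproduces verbatim the asserted chain of inequalities, and the parameter $r'=\min\left\{\frac{t-s}{t-1/2},\frac{s-1/2}{t-1/2}\right\}$ and the range hypothesis on $s,t$ are inherited unchanged from Theorem \ref{okmn345}.

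There is essentially no obstacle here: the argument is a routine substitution, and the only point requiring a moment of care is bookkeeping — keeping track of which index $\alpha$ in $A_j\sharp_\alpha B_j$ produces which power $A_j^{1-\alpha}$, in particular checking that the half-integer term $A_j\sharp B_j$ collapses to $A_j^{1/2}$ and that the two fractional weights $\tfrac{3-2s}{4}$ and $\tfrac{1+2s}{4}$ are interchanged under $\alpha\mapsto 1-\alpha$, which is exactly what makes the displayed product symmetric in the stated form.
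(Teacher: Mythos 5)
Your proof is correct and coincides with the paper's own derivation: the paper obtains this corollary by the single remark that one puts $B_j=I$ in Theorem \ref{okmn345}, and your computation $A\sharp_\alpha I=A^{1-\alpha}$ together with the term-by-term substitution (including the swap of the weights $\tfrac{3-2s}{4}$ and $\tfrac{1+2s}{4}$) is exactly the bookkeeping that justifies it.
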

{If in Theorem \ref{okmn345} we replace $A_j, B_j, s, t$, by $x_j, y_j, {s+1\over2}, {t+1\over2}\,\,(1\leq j\leq n)$, respectively, then we reach another refinement of inequality \eqref{choshi}.
\begin{corollary}
Let $x_j,y_j\,\,(1\leq j\leq n)$ be positive numbers and  either  $-1\leq t\leq s<0$ or $1\geq t\geq s>0$. Then
\begin{align*}
&\sum_{j=1}^nx_j^{1+s\over2}y_j^{1-s\over2} \sum_{j=1}^nx_j^{1-s\over2}y_j^{1+s\over2}
\\&\,\,+\left(\frac{t-s}{t}\right)\left(\sum_{j=1}^nx_j^{1+s\over2}y_j^{1-s\over2} \sum_{j=1}^nx_j^{1-s\over2}y_j^{1+s\over2}
-\left(\sum_{j=1}^nx_j^{1\over2}y_j^{1\over2}\right)^2\right)
\\&\,\,+r'\Big(\sum_{j=1}^nx_j^{1+s\over2}y_j^{1-s\over2} \sum_{j=1}^nx_j^{1-s\over2}y_j^{1+s\over2}
+\left(\sum_{j=1}^nx_j^{1\over2}y_j^{1\over2}\right)^2\\&\,\,\,\,-2\sum_{j=1}^nx_j^{2+s\over4}y_j^{2-s\over4} \sum_{j=1}^nx_j^{2-s\over4}y_j^{2+s\over4}\Big)
\\&\leq \sum_{j=1}^nx_j^{1+t\over2}y_j^{1-t\over2} \sum_{j=1}^nx_j^{1-t\over2}y_j^{1+t\over2}\,,
 \end{align*}
where $r'=\min\left\{\frac{t-s}{t},\frac{s-1}{t-1}\right\}$.
\end{corollary}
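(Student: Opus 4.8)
The plan is to read this corollary off from Theorem~\ref{okmn345} by passing to the commutative (scalar) case and then performing an affine change in the parameters $s$ and $t$, in exact analogy with the corollary that follows Theorem~\ref{vow13}.

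First I would regard each positive number as a positive operator on a one-dimensional Hilbert space and put $A_j=x_j$, $B_j=y_j$ for $1\le j\le n$. In this commutative situation the weighted geometric mean becomes $A_j\sharp_\alpha B_j=x_j^{1-\alpha}y_j^{\alpha}$ and the Hadamard product $\circ$ is just ordinary multiplication of numbers, so the hypotheses of Theorem~\ref{okmn345} hold automatically and its conclusion \eqref{maman2} specializes to
\begin{align*}
&\sum_{j=1}^n x_j^{1-s}y_j^{s}\sum_{j=1}^n x_j^{s}y_j^{1-s}
+\left(\frac{t-s}{t-1/2}\right)\left(\sum_{j=1}^n x_j^{1-s}y_j^{s}\sum_{j=1}^n x_j^{s}y_j^{1-s}-\Big(\sum_{j=1}^n x_j^{1/2}y_j^{1/2}\Big)^2\right)\\
&\quad+r'\Big(\sum_{j=1}^n x_j^{1-s}y_j^{s}\sum_{j=1}^n x_j^{s}y_j^{1-s}+\Big(\sum_{j=1}^n x_j^{1/2}y_j^{1/2}\Big)^2-2\sum_{j=1}^n x_j^{\frac{1+2s}{4}}y_j^{\frac{3-2s}{4}}\sum_{j=1}^n x_j^{\frac{3-2s}{4}}y_j^{\frac{1+2s}{4}}\Big)\\
&\leq \sum_{j=1}^n x_j^{1-t}y_j^{t}\sum_{j=1}^n x_j^{t}y_j^{1-t},
\end{align*}
valid whenever $1\ge t\ge s>\frac12$ or $0\le t\le s<\frac12$, with $r'=\min\bigl\{\frac{t-s}{t-1/2},\frac{s-1/2}{t-1/2}\bigr\}$.

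Next I would substitute $s\mapsto\frac{s+1}{2}$, $t\mapsto\frac{t+1}{2}$. Under this substitution the admissible range of the new parameters becomes precisely $-1\le t\le s<0$ or $1\ge t\ge s>0$, and all the exponents collapse by elementary arithmetic (for instance $1-\frac{s+1}{2}=\frac{1-s}{2}$, $\ \frac{1+2\cdot\frac{s+1}{2}}{4}=\frac{2+s}{4}$, $\ \frac{3-2\cdot\frac{s+1}{2}}{4}=\frac{2-s}{4}$, and the analogues with $t$); the coefficient $\frac{t-s}{t-1/2}$ becomes $\frac{t-s}{t}$, and $r'$ takes the corresponding value at the shifted parameters. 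After renaming the two dummy summation indices inside each product, the displayed inequality is exactly the assertion of the corollary, which in turn refines \eqref{choshi}.

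There is no genuine analytic content here beyond Theorem~\ref{okmn345}; what remains is purely bookkeeping, and that is where the only (mild) obstacle lies: one has to check carefully that every exponent, and both constants $\frac{t-s}{t-1/2}$ and $r'$, transform exactly as claimed under $s\mapsto\frac{s+1}{2}$, $t\mapsto\frac{t+1}{2}$, and that the two descriptions of the parameter range agree. These verifications are entirely routine.
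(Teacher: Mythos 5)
Your proof is exactly the paper's: the corollary is obtained by specializing Theorem~\ref{okmn345} to scalars $A_j=x_j$, $B_j=y_j$ (where $A\sharp_\alpha B=x^{1-\alpha}y^{\alpha}$ and $\circ$ is ordinary multiplication) and substituting $s\mapsto\frac{s+1}{2}$, $t\mapsto\frac{t+1}{2}$, which is precisely the one-line derivation the paper gives. One small point you glossed over: carrying out that substitution in $r'=\min\left\{\frac{t-s}{t-1/2},\frac{s-1/2}{t-1/2}\right\}$ yields $r'=\min\left\{\frac{t-s}{t},\frac{s}{t}\right\}$ (matching the corollary after Theorem~\ref{vow13}), so the value $\frac{s-1}{t-1}$ printed in the statement appears to be a typo, and your derivation actually proves the inequality with the corrected constant.
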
}
\section{Some reverses  of the Callebaut type inequality }

In \cite{W-Z}, the authors showed  a reverse of the  young inequality as follows:
\begin{align}\label{laenana}
 \nu a+(1-\nu)b\leq K\left(\sqrt{\frac{a}{b}},2\right)^{-r'}a^\nu b^{1-\nu}+s\left(
\sqrt{a}-\sqrt{b}\right)^2,
\end{align}
in which $a,b>0$, $\nu\in[0,1]-\left\{\frac{1}{2}\right\}$, $r=\min\left\{\nu,1-\nu\right\}$, $r'=\min\left\{2r,1-2r\right\}$ and $s=\max\left\{\nu,1-\nu\right\}$. Applying \eqref{laenana}  we have the next result.
\begin{lemma}
Let $a,b>0$ and $\nu\in[0,1]-\left\{\frac{1}{2}\right\}$. Then
\begin{align*}
 a+b\leq K\left(\sqrt{\frac{a}{b}},2\right)^{-r'}\left(a^\nu b^{1-\nu}+a^{1-\nu}b^{\nu}\right)+2s\left(
\sqrt{a}-\sqrt{b}\right)^2,
\end{align*}
where   $r=\min\left\{\nu,1-\nu\right\}$, $r'=\min\left\{2r,1-2r\right\}$ and $s=\max\left\{\nu,1-\nu\right\}$. \\In particular, If $\nu\in[0,\frac{1}{2})$, then
\begin{align}\label{now12}
 a+a^{-1}\leq K(a,2)^{-r'}\left(a^{1-2\nu}+a^{-(1-2\nu)}\right)+2(1-\nu)\left(
a^\frac{1}{2}-a^\frac{-1}{2}\right)^2.
\end{align}
\end{lemma}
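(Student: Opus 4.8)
The plan is to obtain both assertions directly from the reverse Young inequality \eqref{laenana}, by exactly the symmetrisation trick that was used to pass from \eqref{Wu-Zhao} to \eqref{kokol}. The key observation is that the three constants $r=\min\{\nu,1-\nu\}$, $r'=\min\{2r,1-2r\}$ and $s=\max\{\nu,1-\nu\}$ are all invariant under the substitution $\nu\mapsto 1-\nu$, so \eqref{laenana} may be applied with the weight $1-\nu$ in place of $\nu$ without any change in the constants occurring on the right-hand side.

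First I would write \eqref{laenana} as it stands,
\begin{align*}
\nu a+(1-\nu)b\leq K\left(\sqrt{\tfrac{a}{b}},2\right)^{-r'}a^{\nu} b^{1-\nu}+s\left(\sqrt{a}-\sqrt{b}\right)^{2},
\end{align*}
and then apply it once more with $1-\nu$ instead of $\nu$, which (since $r,r',s$ are unchanged) gives
\begin{align*}
(1-\nu) a+\nu b\leq K\left(\sqrt{\tfrac{a}{b}},2\right)^{-r'}a^{1-\nu} b^{\nu}+s\left(\sqrt{a}-\sqrt{b}\right)^{2}.
\end{align*}
Summing the two displays, the left-hand sides add up to $a+b$ and the right-hand sides add up to $K\left(\sqrt{a/b},2\right)^{-r'}\left(a^{\nu}b^{1-\nu}+a^{1-\nu}b^{\nu}\right)+2s\left(\sqrt{a}-\sqrt{b}\right)^{2}$, which is precisely the first inequality of the lemma.

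For the particular case, I would specialise to $b=a^{-1}$ with $\nu\in[0,\tfrac12)$. Then $s=\max\{\nu,1-\nu\}=1-\nu$, while $\sqrt{a/b}=a$, $a^{\nu}b^{1-\nu}=a^{2\nu-1}=a^{-(1-2\nu)}$, $a^{1-\nu}b^{\nu}=a^{1-2\nu}$ and $\left(\sqrt{a}-\sqrt{b}\right)^{2}=\left(a^{1/2}-a^{-1/2}\right)^{2}$; plugging these into the inequality just established yields \eqref{now12}. I do not expect any genuine obstacle here: the only care needed is the routine bookkeeping of the exponents and of the constants $r,r',s$ under the substitutions $\nu\mapsto 1-\nu$ and $b\mapsto a^{-1}$, together with the elementary identity $K(t,2)=K(t^{-1},2)$, which guarantees that the Kontorovich factor is unaffected by interchanging $a$ and $b$.
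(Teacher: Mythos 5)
Your proof is correct and follows exactly the route the paper intends (the lemma is stated without an explicit proof, only with the remark that it follows from \eqref{laenana}): apply \eqref{laenana} with $\nu$ and with $1-\nu$, note that $r$, $r'$ and $s$ are invariant under $\nu\mapsto 1-\nu$, sum, and then specialise to $b=a^{-1}$ for \eqref{now12}. The bookkeeping of exponents and constants in your particular case is accurate, so nothing further is needed.
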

Now, utilizing  inequality \eqref{now12} and the same argument  in the proof of Lemma \ref{rar32} we can accomplish the corresponding
result.
\begin{lemma}\label{dear}
Let $0<m' \leq B\leq m <M \leq A\leq M'$ and either $1\geq t\geq s>{\frac{1}{2}}$ or $0\leq t\leq s<\frac{1}{2}$.  Then
\begin{align*}
A^{t}\otimes B^{1-t}+A^{1-t}\otimes B^{t}&\leq K\left(\frac{M^{2t-1}}{m^{2t-1}},2\right)^{-r'}
(A^{s}\otimes B^{1-s}+A^{1-s}\otimes B^{s}) \nonumber\\&+\left(\frac{s-1/2}{t-1/2}\right)\left(A^{t}\otimes B^{1-t}+A^{1-t}\otimes B^{t}-2(A^{\frac{1}{2}}\otimes B^{\frac{1}{2}})\right)\,,
 \end{align*}
 where $r'=\min\left\{\frac{t-s}{t-1/2},\frac{s-1/2}{t-1/2}\right\}$.
\end{lemma}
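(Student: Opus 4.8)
The plan is to reproduce, step for step, the argument of Lemma~\ref{rar32}, feeding in the reverse scalar inequality \eqref{now12} in place of \eqref{ttt1}. Since \eqref{now12} stands to the reverse Young inequality \eqref{laenana} exactly as \eqref{ttt1} stands to Young's inequality, the operator part of the argument transfers verbatim; the only genuine difference is that the Kontorovich constant must be estimated from the opposite side.

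I would proceed as follows. First, recast \eqref{now12}: putting $\mu=1-2\nu$ (so that $\nu\in[0,\tfrac12)$ corresponds to $\mu\in(0,1]$) and using $(a^{1/2}-a^{-1/2})^{2}=a+a^{-1}-2$, it takes the form
\[
a+a^{-1}\ \le\ K(a,2)^{-r'}\bigl(a^{\mu}+a^{-\mu}\bigr)+c(\mu)\bigl(a+a^{-1}-2\bigr),\qquad r'=\min\{1-\mu,\mu\},
\]
where $c(\mu)$ is the coefficient inherited from the $2(1-\nu)$ in \eqref{now12}. Second, pass to operators: fix reals $0<\beta\le\alpha$; by the hypothesis $0<m'\le B\le m<M\le A\le M'$ the operator $A^{\alpha}\otimes B^{-\alpha}$ is positive with spectrum in $[h,h']\subseteq(1,+\infty)$, $h=(M/m)^{\alpha}$, so substitute $a\mapsto A^{\alpha}\otimes B^{-\alpha}$, $\mu\mapsto\beta/\alpha$ through the functional calculus. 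Because the Kontorovich factor now multiplies the \emph{larger} side, I bound it from above, using that $x\mapsto(x+1)^{2}/(4x)$ is increasing on $(1,+\infty)$, by $K(A^{\alpha}\otimes B^{-\alpha},2)^{-r'}\le K(M^{\alpha}/m^{\alpha},2)^{-r'}I$; this is legitimate since that factor and $A^{\beta}\otimes B^{-\beta}+A^{-\beta}\otimes B^{\beta}$ are commuting positive operators (both functions of $A^{\alpha}\otimes B^{-\alpha}$). Then I multiply the resulting operator inequality by the positive operator $A\otimes B$, which commutes with everything present since every operator there is a function of $A\otimes B^{-1}$, converting each $A^{c}\otimes B^{-c}$ into $A^{1+c}\otimes B^{1-c}$ and the $2I$ into $2(A\otimes B)$. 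Third, relabel: replace $(\alpha,\beta,A,B)$ by $(2t-1,2s-1,A^{1/2},B^{1/2})$ when $1\ge t\ge s>\tfrac12$, and by $(1-2t,1-2s,A^{1/2},B^{1/2})$ when $0\le t\le s<\tfrac12$, noting that the hypothesis is exactly what makes $0<\beta\le\alpha$, that $r'=\min\{1-\beta/\alpha,\beta/\alpha\}$ becomes $\min\bigl\{\tfrac{t-s}{t-1/2},\tfrac{s-1/2}{t-1/2}\bigr\}$, and that $K(x,2)=K(x^{-1},2)$ lets one write the Kontorovich argument as $M^{2t-1}/m^{2t-1}$. This yields the asserted inequality with $c(\beta/\alpha)$ in place of $\tfrac{s-1/2}{t-1/2}$.

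The one nontrivial point is keeping the coefficient straight: tracing $2(1-\nu)$ through $\nu\mapsto\mu=1-2\nu\mapsto\beta/\alpha$ and then $\alpha=2t-1$, $\beta=2s-1$ gives $c(\mu)=1+\mu$, hence $c(\beta/\alpha)=1+\tfrac{2s-1}{2t-1}=\tfrac{t+s-1}{t-1/2}$, which is $\tfrac{s-1/2}{t-1/2}+1$; at this step one should therefore recheck \eqref{now12} and, if it holds as printed, take $\tfrac{t+s-1}{t-1/2}$ for the coefficient in the conclusion. The remaining items are routine: multiplying $X\le Y$ through by $A\otimes B$ is valid because $A\otimes B\ge0$ and commutes with $X$ and $Y$; and the monotonicity of the Kontorovich constant must be used in the direction opposite to that in Lemma~\ref{rar32}, since now it controls the greater side of the inequality.
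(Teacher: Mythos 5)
Your route is exactly the paper's: the paper offers no independent proof of Lemma~\ref{dear}, only the remark that it follows from inequality~\eqref{now12} by ``the same argument as in the proof of Lemma~\ref{rar32}'', which is precisely what you carry out (and you treat the case $0\le t\le s<\tfrac12$ more carefully than the paper does, via $K(x,2)=K(x^{-1},2)$). Your coefficient bookkeeping is also correct, and the discrepancy you flag is real: with $\mu=1-2\nu$, \eqref{now12} reads $a+a^{-1}\le K(a,2)^{-r'}(a^{\mu}+a^{-\mu})+(1+\mu)(a+a^{-1}-2)$, so after $\mu=\beta/\alpha=\tfrac{2s-1}{2t-1}$ the second coefficient is $1+\tfrac{s-1/2}{t-1/2}=\tfrac{t+s-1}{t-1/2}$, not $\tfrac{s-1/2}{t-1/2}$. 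The lemma as printed is in fact false with the smaller coefficient: in the one-dimensional case $A=M=M'=16$, $B=m=m'=1$, $t=1$, $s=3/4$ (so $r'=1/2$), the left side equals $17$ while the right side equals $\tfrac{8}{17}(8+2)+\tfrac12\cdot 9\approx 9.21$; with the coefficient $\tfrac{t+s-1}{t-1/2}=\tfrac32$ the right side becomes $\approx 18.21$ and the inequality holds. So you should state and prove the conclusion with $\tfrac{t+s-1}{t-1/2}$, as you suspected.

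One further point you should not let pass: after the relabelling $(A,B)\mapsto(A^{1/2},B^{1/2})$ the operator actually substituted is $A^{t-1/2}\otimes B^{-(t-1/2)}$, whose spectrum lies in $[(M/m)^{t-1/2},(M'/m')^{t-1/2}]$, so the Kontorovich argument produced by the computation is $(M/m)^{t-1/2}$, not $M^{2t-1}/m^{2t-1}$. For the reverse inequality one needs an upper bound on $K(\cdot,2)^{-r'}$ over the spectrum, and the justified bound is $K\bigl((M/m)^{t-1/2},2\bigr)^{-r'}$; the constant $K\bigl((M/m)^{2t-1},2\bigr)^{-r'}$ appearing in the statement is smaller and hence not delivered by this argument. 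This exponent slip is already present in Lemma~\ref{rar32} and is inherited here; your write-up asserts that $M^{2t-1}/m^{2t-1}$ ``comes out'' of the substitution, which it does not.
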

As a consequence of Lemma \ref{dear} we have the following result.
\begin{theorem}\label{mainth}
Let $0<m' \leq B_j\leq m <M \leq A_j\leq M'\,\,(1\leq j\leq n)$.  Then
\begin{align*}
\sum_{j=1}^n(A_j&\sharp_{t}B_j)\circ \sum_{j=1}^n(A_j\sharp_{1-t} B_j)\leq K\left(\frac{M^{2t-1}}{m^{2t-1}},2\right)^{-r'}
\sum_{j=1}^n(A_j\sharp_{s}B_j)\circ \sum_{j=1}^n(A_j\sharp_{1-s}B_j)
\\&\,\,+\left(\frac{s-1/2}{t-1/2}\right)\left(\sum_{j=1}^n(A_j\sharp_{t}B_j)\circ \sum_{j=1}^n(A_j\sharp_{1-t}B_j)
-\sum_{j=1}^n(A_j\sharp B_j)\circ \sum_{j=1}^n(A_j\sharp B_j)\right)
\end{align*}
for either $1\geq t\geq s>{\frac{1}{2}}$ or $0\leq t\leq s<\frac{1}{2}$ and $r'=\min\left\{\frac{t-s}{t-1/2},\frac{s-1/2}{t-1/2}\right\}$.
\end{theorem}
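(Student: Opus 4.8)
The plan is to imitate the proof of Theorem~\ref{vow13} line by line, using Lemma~\ref{dear} in place of inequality~\eqref{tool}; every step is the mirror image of the one there, with the inequality reversed, the weight $\frac{t-s}{t-1/2}$ replaced by $\frac{s-1/2}{t-1/2}$, and the Kontorovich factor raised to the power $-r'$ instead of $r'$. As in that proof, I first put $C_j=A_j^{-1/2}B_jA_j^{-1/2}$ for $1\le j\le n$. From $0<m'\le B_j\le m<M\le A_j\le M'$ one gets $\mathrm{sp}(C_j)\subseteq[m'/M',\,m/M]$ for every $j$, so the four constants $m',m,M,M'$ control all the $C_j$ simultaneously; this is what will let a single Kontorovich constant serve for every index pair.

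Next I lift Lemma~\ref{dear} to the two-index tensor setting exactly as \eqref{evs} is obtained from \eqref{tool}: running the argument of Lemma~\ref{dear} (which rests on \eqref{now12} and the monotonicity of $a\mapsto K(a,2)$ on $(1,+\infty)$) for the pair $C_j,C_i$ gives, for all $1\le i,j\le n$,
\begin{align*}
C_j^{t}\otimes C_i^{1-t}+C_j^{1-t}\otimes C_i^{t}&\le K\!\left(\frac{M^{2t-1}}{m^{2t-1}},2\right)^{-r'}\!\bigl(C_j^{s}\otimes C_i^{1-s}+C_j^{1-s}\otimes C_i^{s}\bigr)\\
&\quad+\left(\frac{s-1/2}{t-1/2}\right)\bigl(C_j^{t}\otimes C_i^{1-t}+C_j^{1-t}\otimes C_i^{t}-2\,C_j^{1/2}\otimes C_i^{1/2}\bigr),
\end{align*}
the Kontorovich constant being independent of $(i,j)$ by the remark just made. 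I then conjugate both sides by the positive operator $A_j^{1/2}\otimes A_i^{1/2}$; since $(A_j^{1/2}\otimes A_i^{1/2})(C_j^{\alpha}\otimes C_i^{\beta})(A_j^{1/2}\otimes A_i^{1/2})=(A_j\sharp_{\alpha}B_j)\otimes(A_i\sharp_{\beta}B_i)$ and conjugation by a positive operator preserves order, this is the analogue of passing from \eqref{evs} to \eqref{9090}, and each $C_j^{\alpha}\otimes C_i^{\beta}$ becomes $(A_j\sharp_{\alpha}B_j)\otimes(A_i\sharp_{\beta}B_i)$.

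Finally I compress with the isometry $U$ of the introduction, using $X\circ Y=U^{*}(X\otimes Y)U$, sum the resulting inequalities over all $1\le i,j\le n$, multiply by $\tfrac12$, and collapse the double sums by $\sum_{i,j}(A_j\sharp_{\alpha}B_j)\circ(A_i\sharp_{\beta}B_i)=\bigl(\sum_{j}A_j\sharp_{\alpha}B_j\bigr)\circ\bigl(\sum_{i}A_i\sharp_{\beta}B_i\bigr)$, the commutativity of $\circ$ making the two symmetric halves of the $t$-block agree. This delivers precisely the stated inequality. Given Lemma~\ref{dear}, nothing genuinely hard remains: the only point that really needs attention is the lift of the single-pair estimate to the tensor-indexed version with one common Kontorovich constant $K(M^{2t-1}/m^{2t-1},2)$, and, as in Theorem~\ref{vow13}, the bookkeeping of the factor $\tfrac12$ and of the cross terms $i\ne j$ in the symmetrized sum.
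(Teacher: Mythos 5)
Your proposal is correct and follows exactly the route the paper intends: the paper offers no separate proof of Theorem~\ref{mainth}, merely presenting it as a consequence of Lemma~\ref{dear}, and your argument is the verbatim mirror of the proof of Theorem~\ref{vow13} with Lemma~\ref{dear} in place of inequality~\eqref{tool}, including the conjugation by $A_j^{1/2}\otimes A_i^{1/2}$, the compression by $U$, and the symmetrized double sum. One caveat you share with the paper itself: when Lemma~\ref{dear} is invoked for the pair $(C_j,C_i)$ with the constant $K\bigl(M^{2t-1}/m^{2t-1},2\bigr)$, the operators $C_j$ and $C_i$ both have spectrum in $[m'/M',\,m/M]$ and so do not literally satisfy the ordering hypothesis $0<m'\leq B\leq m<M\leq A\leq M'$ of that lemma (the operator $C_j^{2t-1}\otimes C_i^{-(2t-1)}$ is only bounded below by $\bigl(m'M/(M'm)\bigr)^{2t-1}$, which need not exceed $1$), but this imprecision is already present in the paper's proof of Theorem~\ref{vow13} and is not something your argument introduces.
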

\begin{remark}
If we put $t=1$ and $1\geq s>{\frac{1}{2}}$ in Theorem \ref{mainth}, then we get a reverse of the third inequality of  \eqref{34rf}
\begin{align*}
\left(\sum_{j=1}^nA_j \right)\circ& \left(\sum_{j=1}^n B_j\right)\leq K\left(\frac{M}{m},2\right)^{-r'}
\sum_{j=1}^n(A_j\sharp_{s}B_j)\circ \sum_{j=1}^n(A_j\sharp_{1-s}B_j)
\\&\,\,+\left({2s-1}\right)\left(\left(\sum_{j=1}^nA_j \right)\circ \left(\sum_{j=1}^n B_j\right)
-\sum_{j=1}^n(A_j\sharp B_j)\circ \sum_{j=1}^n(A_j\sharp B_j)\right),
\end{align*}
where $r'=\min\left\{{2-2s},{2s-1}\right\}$.
\end{remark}

{If  we replace $A_j, B_j, s, t$, by $x_j, y_j, {s+1\over2}, {t+1\over2}\,\,(1\leq j\leq n)$ in Theorem \ref{mainth}, respectively, then we reach a reverse of the second inequality of  \eqref{choshi}
\begin{corollary}
Let $0<m' \leq y_j\leq m <M \leq x_j\leq M'\,\,(1\leq j\leq n)$.  Then
\begin{align*}
\sum_{j=1}^nx_j^{1+t\over2}y_j^{1-t\over2}& \sum_{j=1}^nx_j^{1-t\over2}y_j^{1+t\over2}\leq K\left(\frac{M^{2t-1}}{m^{2t-1}},2\right)^{-r'}
\sum_{j=1}^nx_j^{1+s\over2}y_j^{1-s\over2} \sum_{j=1}^nx_j^{1-s\over2}y_j^{1+s\over2}
\\&\,\,+\left(\frac{s-1/2}{t-1/2}\right)\left(\sum_{j=1}^nx_j^{1+t\over2}y_j^{1-t\over2} \sum_{j=1}^nx_j^{1-t\over2}y_j^{1+t\over2}
-\left(\sum_{j=1}^nx_j^{1\over2}y_j^{1\over2}\right)^2\right)
\end{align*}
for either $1\geq t\geq s>{\frac{1}{2}}$ or $0\leq t\leq s<\frac{1}{2}$ and $r'=\min\left\{\frac{t-s}{t-1/2},\frac{s-1/2}{t-1/2}\right\}$.
\end{corollary}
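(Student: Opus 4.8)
The plan is to obtain the corollary as the commutative specialization of Theorem \ref{mainth}. View each positive real $x_j$ (resp.\ $y_j$) as a positive operator on the one‑dimensional Hilbert space $\mathbb{C}$, i.e.\ as a $1\times 1$ positive matrix. For scalars the weighted geometric mean reduces to $x_j\sharp_\alpha y_j=x_j^{1-\alpha}y_j^{\alpha}$ and the Hadamard product $\circ$ is ordinary multiplication, so that for any exponents $\alpha,\beta$ one has $\sum_{j}(x_j\sharp_\alpha y_j)\circ\sum_{j}(x_j\sharp_\beta y_j)=\big(\sum_{j}x_j^{1-\alpha}y_j^{\alpha}\big)\big(\sum_{j}x_j^{1-\beta}y_j^{\beta}\big)$, and in particular $\sum_j(x_j\sharp y_j)\circ\sum_j(x_j\sharp y_j)=\big(\sum_j\sqrt{x_jy_j}\big)^2$. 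Moreover the hypothesis $0<m'\le B_j\le m<M\le A_j\le M'$ of Theorem \ref{mainth} becomes precisely the hypothesis $0<m'\le y_j\le m<M\le x_j\le M'$ of the corollary.

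First I would apply Theorem \ref{mainth} with $A_j=x_j$ and $B_j=y_j$; by the identities above this immediately produces the scalar reverse Callebaut inequality in the variables $s,t$ (valid whenever $1\ge t\ge s>\tfrac12$ or $0\le t\le s<\tfrac12$). Then I would carry out the substitution $s\mapsto\frac{s+1}{2}$, $t\mapsto\frac{t+1}{2}$ exactly as indicated in the text preceding the statement. A routine check of exponents gives $x_j\sharp_{(t+1)/2}y_j=x_j^{(1-t)/2}y_j^{(1+t)/2}$ and $x_j\sharp_{(1-t)/2}y_j=x_j^{(1+t)/2}y_j^{(1-t)/2}$ (and similarly with $s$), so the two $t$‑sums turn into $\sum_j x_j^{(1+t)/2}y_j^{(1-t)/2}$ and $\sum_j x_j^{(1-t)/2}y_j^{(1+t)/2}$, while the diagonal term $\sharp=\sharp_{1/2}$ is unaffected and remains $\big(\sum_j\sqrt{x_jy_j}\big)^2$. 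The admissible range of parameters, the exponent $r'$, the coefficient $\frac{s-1/2}{t-1/2}$, and the argument of the Kontorovich constant transform accordingly, and collecting terms yields the displayed chain of inequalities; it is a reverse of the $s$‑to‑$t$ step in the numerical Callebaut inequality \eqref{choshi}.

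There is no genuine analytic obstacle here, since Theorem \ref{mainth} carries all the weight. The only points that require care are (i) confirming that the one‑dimensional case is a legitimate instance of Theorem \ref{mainth} — which it is, because the positive reals are exactly the positive cone of $\mathbb{B}(\mathbb{C})$ and both $\sharp_\alpha$ and $\circ$ specialize as above — and (ii) the exponent and parameter bookkeeping under $s\mapsto\frac{s+1}{2}$, $t\mapsto\frac{t+1}{2}$, including the induced change in the range of admissible $s,t$ and in the quantities $r'$ and $K\!\big(\tfrac{M^{2t-1}}{m^{2t-1}},2\big)$.
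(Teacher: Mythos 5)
Your proposal is correct and coincides with the paper's own (one-sentence) proof: specialize Theorem \ref{mainth} to the $1\times 1$ operators $A_j=x_j$, $B_j=y_j$, using $x\sharp_\alpha y=x^{1-\alpha}y^{\alpha}$ and that $\circ$ is ordinary multiplication, and then substitute $s\mapsto\frac{s+1}{2}$, $t\mapsto\frac{t+1}{2}$. Your bookkeeping point (ii) is exactly the place to be careful: under that substitution the admissible ranges become $1\geq t\geq s>0$ or $-1\leq t\leq s<0$, the coefficient $\frac{s-1/2}{t-1/2}$ becomes $\frac{s}{t}$, $r'$ becomes $\min\left\{\frac{t-s}{t},\frac{s}{t}\right\}$, and the Kontorovich argument becomes $\frac{M^{t}}{m^{t}}$ (as in the analogous corollary to Theorem \ref{vow13}), whereas the corollary as printed leaves these quantities in their pre-substitution form.
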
}
\begin{proposition}
Let $0<m' \leq B_j\leq m <M \leq A_j\leq M'\,\,(1\leq j\leq n)$ and either $1\geq t\geq s>{\frac{1}{2}}$ or $0\leq t\leq s<\frac{1}{2}$.  Then
\begin{align*}
K\left({h^{2t-1}},2\right)^{r'}&
\sum_{j=1}^n(A_j\sharp_{s}B_j)\circ \sum_{j=1}^n(A_j\sharp_{1-s}B_j)
+\left(\frac{t-s}{t-1/2}\right)\left(\sqrt{h}-
\sqrt{\frac{1}{h}}\right)^2\\&\leq\sum_{j=1}^n(A_j\sharp_{t}B_j)\circ \sum_{j=1}^n(A_j\sharp_{1-t} B_j)\\&\leq K\left({h^{2t-1}},2\right)^{-r'}
\sum_{j=1}^n(A_j\sharp_{s}B_j)\circ \sum_{j=1}^n(A_j\sharp_{1-s}B_j)
\\&\qquad+\left(\frac{s-1/2}{t-1/2}\right)\left(\sqrt{h'}-\sqrt{\frac{1}{h'}}\right)^2,
\end{align*}
where $h=\frac{M}{m}$, $h'=\frac{M'}{m'}$ and $r'=\min\left\{\frac{t-s}{t-1/2},\frac{s-1/2}{t-1/2}\right\}$.
\end{proposition}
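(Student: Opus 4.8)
The plan is to prove the two displayed inequalities separately and concatenate them: the left-hand one is extracted from Theorem \ref{vow13}, the right-hand one from Theorem \ref{mainth}, in each case by replacing the operator-valued correction term
\[
E:=\sum_{j=1}^n(A_j\sharp_{t}B_j)\circ \sum_{j=1}^n(A_j\sharp_{1-t}B_j)-\sum_{j=1}^n(A_j\sharp B_j)\circ \sum_{j=1}^n(A_j\sharp B_j)\ \ (\ge 0)
\]
by a scalar multiple of $I$, using the monotonicity of $x\mapsto x+x^{-1}-2=(\sqrt x-x^{-1/2})^2$.

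For the left inequality, note that the coefficient $\frac{t-s}{t-1/2}$ is nonnegative under either hypothesis on $s,t$, so it suffices to establish $E\ge\big(\sqrt h-1/\sqrt h\big)^2 I$ and substitute this into \eqref{maman}. To prove this bound I would revisit the proof of Lemma \ref{rar32}: the term $A^{t}\otimes B^{1-t}+A^{1-t}\otimes B^{t}-2(A^{1/2}\otimes B^{1/2})$ arises, after the reindexing $\alpha,\beta,A,B\mapsto 2t-1,2s-1,A^{1/2},B^{1/2}$, from the quantity $c+c^{-1}-2$ of \eqref{ttt1} with $c$ replaced by a tensor operator whose spectrum is confined to an interval with endpoints governed by $h=M/m$ and $h'=M'/m'$; since $x\mapsto x+x^{-1}-2$ is increasing on $(1,+\infty)$, it is bounded below on that interval by its value at the endpoint nearest $1$, namely $\big(\sqrt h-1/\sqrt h\big)^2$ (the case $t<\tfrac12$ being covered by the same monotonicity together with the symmetry $K(x,2)=K(1/x,2)$). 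The positive geometric-mean factor, the summation over the two indices $i,j$ exactly as in the proof of Theorem \ref{vow13}, and the compression $U^{*}(\,\cdot\,)U$ all preserve this scalar lower bound, and feeding it into \eqref{maman} yields the first inequality.

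For the right inequality I would argue symmetrically, starting from Theorem \ref{mainth}, whose conclusion already isolates $E$ with the nonnegative coefficient $\frac{s-1/2}{t-1/2}$. The matching \emph{upper} bound $E\le\big(\sqrt{h'}-1/\sqrt{h'}\big)^2 I$ comes from the same monotonicity applied via the reverse scalar inequality \eqref{now12} and Lemma \ref{dear}, now evaluating $x\mapsto x+x^{-1}-2$ at the endpoint of the relevant spectral interval farthest from $1$, which is controlled by $h'=M'/m'$. Substituting into the conclusion of Theorem \ref{mainth} gives the second inequality, and together the two yield the Proposition.

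The step I expect to be the real obstacle is the bookkeeping that ties the spectral interval of the tensor operator built from the pair $(C_j,C_i)$, with $C_j=A_j^{-1/2}B_jA_j^{-1/2}$, as it appears inside the proofs of Theorems \ref{vow13} and \ref{mainth}, to the scalars $h$ and $h'$: one must check that after all the substitutions the operator whose spectral endpoints enter the estimate is precisely the one governed by $I\le h^{2t-1}I\le(\,\cdot\,)\le {h'}^{2t-1}I$, and keep careful track of the signs of $\frac{t-s}{t-1/2}$ and $\frac{s-1/2}{t-1/2}$ so that the one-sided bounds on $E$ are inserted with the correct orientation into \eqref{maman} and into the conclusion of Theorem \ref{mainth}.
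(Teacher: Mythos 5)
Your reduction to Theorems \ref{vow13} and \ref{mainth} hinges on the two-sided operator bound $\bigl(\sqrt h-1/\sqrt h\bigr)^2 I\le E\le\bigl(\sqrt{h'}-1/\sqrt{h'}\bigr)^2 I$ for $E=\sum_{j}(A_j\sharp_{t}B_j)\circ\sum_{j}(A_j\sharp_{1-t}B_j)-\sum_{j}(A_j\sharp B_j)\circ\sum_{j}(A_j\sharp B_j)$, and that bound is false. Take $n=1$ with $A_1,B_1$ commuting (say diagonal) and $0<m'\le B_1\le m<M\le A_1\le M'$: then $A_1\sharp_{\alpha}B_1=A_1^{1-\alpha}B_1^{\alpha}$, so $(A_1\sharp_{\alpha}B_1)\circ(A_1\sharp_{1-\alpha}B_1)$ has $k$-th diagonal entry $x_ky_k$ for \emph{every} $\alpha$, whence $E=0$, which is not $\ge(\sqrt h-1/\sqrt h)^2I>0$. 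Moreover $E$ is homogeneous of degree $2$ under $(A_j,B_j)\mapsto(\lambda A_j,\lambda B_j)$ while $h,h'$ are scale-invariant, so the proposed upper bound also fails whenever $E\ne0$ and $\lambda$ is large. The reason the endpoint values of $x\mapsto(\sqrt x-1/\sqrt x)^2$ do not control $E$ is visible in Lemma \ref{rar32}: that function is evaluated at $a=A^{\alpha}\otimes B^{-\alpha}$ and the result is then multiplied by $A^{1/2}\otimes B^{1/2}$ and, in Theorem \ref{vow13}, by $A_j^{1/2}\otimes A_i^{1/2}$, so the quantity $(\sqrt a-1/\sqrt a)^2$ resurfaces inside $E$ only after being weighted by $(A_j\sharp B_j)\otimes(A_i\sharp B_i)$; no scale-free scalar sandwich survives this.

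The paper's own route is different: it inserts the endpoint estimates at the \emph{scalar} level, into \eqref{kokol} and \eqref{now12} (equivalently into \eqref{ttt1}) before the functional calculus, and then reruns the argument of Theorem \ref{vow13}. You should know, however, that this does not rescue the statement either: carried out with the bookkeeping you rightly flagged as the obstacle, that route produces the correction term $\frac{t-s}{t-1/2}\bigl(\sqrt{h}-1/\sqrt{h}\bigr)^2\sum_j(A_j\sharp B_j)\circ\sum_j(A_j\sharp B_j)$ (with $h^{2t-1}$ where the Proposition writes $h$), not the pure scalar multiple of $I$ that is printed. Indeed the commuting $n=1$ example above reduces the first asserted inequality to $K(h^{2t-1},2)^{r'}A_1B_1+\frac{t-s}{t-1/2}(\sqrt h-1/\sqrt h)^2I\le A_1B_1$ with $K(h^{2t-1},2)^{r'}>1$ when $1\ge t>s>\frac12$, which is absurd. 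So the missing step in your plan is not fixable: the one-sided scalar bounds on $E$ that you need do not exist, and the target inequality itself must first be reweighted by $\sum_j(A_j\sharp B_j)\circ\sum_j(A_j\sharp B_j)$ before either your route or the paper's can close.
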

\begin{proof}
Since the function  $f(a)=a-\frac{1}{a}$ is increasing on $(0,\infty)$, we have
\begin{align*}
\left(\sqrt{h}-\sqrt{\frac{1}{h}}\right)^2 \leq \left(\sqrt{ a}-\sqrt{\frac{1}{a}}\right)^2\leq\left(\sqrt{h'}-\sqrt{\frac{1}{h'}}\right)^2\,\,\,(h\leq a\leq h').
\end{align*}
 Applying inequalities \eqref{kokol}, \eqref{now12} and the same argument in the proof of Theorem \ref{vow13} we get the desired result.
 \end{proof}

\textbf{Acknowledgement.} The author would like to sincerely thank the anonymous  referee for some useful comments and suggestions. The author also would like to thank the Tusi Mathematical Research Group (TMRG).

\end{document}